\newcommand{\iv}[1]{{#1}^{-1}}
\newcommand{\sgr}[1]{\langle #1 \rangle} 
\newcommand{\csgr}[1]{{\rm C}[#1]}
\DeclareSymbolFont{forpolishl}{T1}{cmr}{m}{n}
\DeclareMathSymbol{\mathrmL}{0}{forpolishl}{'212}
\newcommand{\el}{$\ell$-}
\newcommand{\si}{\ensuremath{\sigma}}
\newcommand{\m}[1]{{\bf {#1}}}												
\newcommand{\vty}[1]{\ensuremath{\mathcal{#1}}}						
\newcommand{\cng}{{\rm Con}}	
\newcommand{\jn}{\vee}
\newcommand{\mt}{\wedge}
\newcommand{\rd}{\slash}
\newcommand{\ld}{\backslash}
\newcommand{\ut}{\textrm{\textup{e}}}
\newcommand{\eq}{\approx}
\newcommand{\abs}[1]{{ |{#1}|}}
\newcommand{\LG}{\mathcal{LG}}
\newcommand{\Rl}{\mathcal{RL}}
\newcommand{\Icansemrl}{\mathcal{S}em\,\mathcal{C}an\,\mathcal{I}\mathcal{RL}}
\newcommand{\Ni}{\mathcal{N}^{c\,}\mathcal{S}em\,\mathcal{C}an\,\mathcal{I}\mathcal{RL}}
\newcommand{\Nrl}{\mathcal{N}^{c\,}\mathcal{C}an\mathcal{RL}}
\newcommand{\Ntworl}{\mathcal{N}^{\,2}\mathcal{C}an\mathcal{RL}}
\newcommand{\LGcn}{\mathcal{LG}_{cn}}
\newcommand{\NLGcn}{\mathcal{N}^c\mathcal{LG}_{cn}}
\newcommand{\N}{ {\mathbb N} }
\newcommand{\Z}{ {\mathbb Z} }
\newtheorem{theorem}{Theorem}[section]
\newtheorem{lemma}[theorem]{Lemma}
\newtheorem{corollary}[theorem]{Corollary}
\newtheorem{proposition}[theorem]{Proposition}
\theoremstyle{definition}
\newtheorem{remark}[theorem]{Remark}
\newtheorem{example}[theorem]{Example}
\numberwithin{equation}{section}
\newcommand\nbd[1]{\protect\nobreakdash#1\hspace{0pt}}
\begin{document}

\title{Nilpotency and the Hamiltonian property for cancellative residuated lattices}

\author{Almudena Colacito}
\address{Laboratoire J. A. Dieudonn\'e, Universit\'e C\^ote d'Azur, Nice, France}
\email{almudena.colacito@unice.fr}
\thanks{The research reported here was funded by the Swiss National Science Foundation (SNF) grants No 200021\_165850,  No 200021\_184693, and No P2BEP2\_195473, and by the EU Horizon 2020 research and innovation programme under the Marie Sk{\l}odowska-Curie grant agreement No 689176.} 

\author{Constantine Tsinakis}
\address{Department of Mathematics, Vanderbilt University, Nashville, Tennessee, USA}
\email{constantine.tsinakis@vanderbilt.edu}

\subjclass[2010]{Primary 06F05; Secondary 06D35,
06F15, 03B47, 08B20}



\keywords{Cancellative residuated lattice, Categorical equivalence, Hamiltonian lattice-ordered group, Lattice-ordered group, Nilpotent group, Nilpotent semigroup}

\begin{abstract}
The present article studies nilpotent and Hamiltonian cancellative residuated lattices and their relationship with nilpotent and Hamiltonian lattice-ordered groups.  In particular, results about lattice-ordered groups are extended to the domain of residuated lattices. The two key ingredients that underlie the considerations of this paper are the categorical equivalence between Ore residuated lattices and lattice-ordered groups endowed with a suitable modal operator; and Malcev's description of nilpotent groups of a given nilpotency class $c$ in terms of a semigroup equation.
\end{abstract}

\maketitle


\section{Introduction}\label{s:intro}

The present article studies nilpotent and Hamiltonian cancellative residuated lattices and their relationship with nilpotent and Hamiltonian lattice-ordered groups.  In particular, results about lattice-ordered groups (\el groups) are extended to the domain of residuated lattices. The two key ingredients that underlie the considerations of this paper are the categorical equivalence of~\cite{MT10}, which provides a new framework for the study of various classes of cancellative residuated lattices by viewing these structures as \el groups with a suitable modal operator; and Malcev's description~\cite{M53} (see also~\cite{NT63}) of nilpotent groups of a given nilpotency class $c$ in terms of a semigroup equation $\mathsf{L}_c$ (to be defined in Section \ref{s:Nilpotent}).

A plethora of evidence has been accumulated during the past two decades  demonstrating the fundamental importance of \el groups in the study of  algebras of logic\footnote{We use the term \emph{algebras of logic} to refer to residuated lattices---algebraic counterparts of propositional substructural logics---and their reducts. Substructural logics are non-classical logics that are weaker than classical logic, in the sense that they may lack one or more of the structural rules of contraction, weakening and exchange in their Gentzen-style axiomatization. They encompass a large number of non-classical logics related to computer science (linear logic), linguistics (Lambek Calculus), philosophy (relevant logics), and many-valued reasoning.}. For example, an essential result~\cite{Mun86} in the theory of MV-algebras is the categorical equivalence between the category of MV-algebras and the category of unital Abelian \el groups. Likewise, the non-commutative generalization of this result in~\cite{Dvu02} establishes a categorical equivalence between the category of pseudo MV-algebras and the category of unital \el groups. Further, the generalization of these two results in~\cite{MT10} shows that one can view GMV-algebras as \el groups with a suitable modal operator. The categorical equivalence in~\cite{MT10} mentioned above is another example in point.

In a complementary direction, the articles~\cite{LPT14, GFLT15, BKLT16, GFLT17, LPT18, BKT20, GFSTZ20} have  shown that large parts of the Conrad Program can be profitably extended to the much wider class of \emph{$\ut$\nbd{-}cyclic} residuated lattices, that is, those residuated lattices satisfying the equation $x\ld \ut\eq \ut\rd x$. The term \emph{Conrad Program} traditionally refers to P. Conrad's approach to the study of \el groups, which analyzes the structure of individual \el groups, or classes of \el groups, by means of an overriding inquiry into the lattice-theoretic properties of their lattices of convex \el subgroups. In the 1960s, Conrad's articles~\cite{Con60, Con61, Con65, Con68, Con69, Con73} pioneered this approach and demonstrated its usefulness. 

The present work builds on the aforementioned research. Nilpotent \el groups are the $\ell$\nbd{-}groups whose group reducts are (necessarily torsion-free) nilpotent groups. They share many important properties with Abelian \el groups, including representability (semilinearity) and the Hamiltonian property. In particular, they satisfy the congruence extension property.  The notion of Hamiltonian algebra arises as a generalization of the concept of Hamiltonian group~\cite{MR133391}. Borrowing the terminology from group theory, an \el group is said to be Hamiltonian if every convex \el subgroup is normal. Hamiltonian \el groups were first introduced implicitly in~\cite{Martinez1972}, and later studied extensively (see, e.g.,~\cite{Conrad1980, Reilly1983, MR1801994, MR2238913}). While Hamiltonian \el groups do not form a variety (\cite[Proposition 1.4]{Conrad1980}), a largest variety of Hamiltonian \el groups does exist and was identified in~\cite{Reilly1983}. A significant property of Hamiltonian \el groups is representability---namely, each Hamiltonian \el group is a subdirect product of totally ordered groups. Representability and the Hamiltonian property were established for nilpotent \el groups in~\cite{Kop75} (see also~\cite{Holl78} and~\cite{Reilly1983}, respectively). 

We conclude the introduction by illustrating the article's discourse. 
In Section~\ref{s:basics}, we dispatch some preliminaries on residuated lattices and their convex subuniverses. In Section~\ref{s:Nilpotent}, we study the quasivariety of submonoids of nilpotent $\ell$\nbd{-}groups. In particular, Theorem~\ref{t:five} shows that submonoids of nilpotent \el groups are precisely those nilpotent cancellative monoids that have unique roots. The theorem also provides a characterization for the quasivariety of submonoids of nilpotent cancellative residuated lattices. Its proof makes use of Theorem~\ref{t:catequiv}, which provides a bridge that connects nilpotent cancellative residuated lattices and nilpotent \el groups. 
The focus of Section~\ref{s:prelinearity} is the prelinearity property, with particular interest for some of its implications and equivalent formulations. We show in Theorem~\ref{t:prelinearity} that residuals in a prelinear residuated lattice preserve finite joins in the numerator, and convert finite meets to joins in the denominator. While prelinearity implies semilinearity in the presence of commutativity~\cite{MR1919685}, this is no longer the case for non-commutative varieties of residuated lattices. However, Theorem~\ref{t:prelinearity} shows that any prelinear cancellative residuated lattice has a distributive lattice reduct.

Section~\ref{s:Hamiltonian} is devoted to Hamiltonian residuated lattices. Theorem~\ref{t:hamiltrepres} shows that any Hamiltonian prelinear $\ut$-cyclic residuated lattice is semilinear, which implies that any Hamiltonian prelinear cancellative residuated lattice is semilinear (Corollary~\ref{c:hamsemil}). In particular, the result that Hamiltonian \el groups are representable is extended in Corollary~\ref{c:hamsemil} to prelinear cancellative residuated lattices. With these results at hand, we prove that there exists a  largest variety of Hamiltonian prelinear cancellative residuated lattices  (Theorem~\ref{t:hamvty}), thereby extending the corresponding result for \el groups. The main focus of Section~\ref{s:NilpotentPrel} is the class of nilpotent residuated lattices. First, nilpotent cancellative residuated lattices are proved to be Hamiltonian. As a consequence, nilpotent prelinear cancellative residuated lattices are semilinear. The arguments make use of the corresponding results for $\ell$\nbd{-}groups, by means of the categorical equivalence between nilpotent cancellative residuated lattices and nilpotent $\ell$\nbd{-}groups with a conucleus (see\ Theorem~\ref{t:catequiv}).

Given the role that semilinearity plays in the study of Hamiltonian and nilpotent prelinear cancellative varieties, the final section of the paper discusses varieties of semilinear cancellative residuated lattices. We show, {\em inter alia}, that any variety $\vty{V}$ of semilinear cancellative integral residuated lattices defined by monoid equations is generated by residuated chains whose monoid reducts are finitely generated objects in the quasivariety of monoid subreducts corresponding to $\vty{V}$ (Theorem~\ref{t:fgmongen}). 

\section{Residuated Lattices: Basic Concepts}\label{s:basics}

In this section we briefly recall some basic facts about residuated lattices and their structure; we refer to~\cite{BT03},~\cite{jip:res},~\cite{GJKO07}, and~\cite{MR2731976} for further details.

The set of positive natural numbers is $\N \coloneqq \{1, 2, \ldots\}$, and $\Z^+$ is the set $\N \cup \{0\}$. Throughout, by `poset' we mean `partially ordered set'. If $\mathcal{L}$ is a signature and $\mathcal{L}'\subseteq\mathcal{L}$, an $\mathcal{L}'$-algebra $\m{A}$ is an $\mathcal{L}'$-subreduct of an $\mathcal{L}$-algebra $\m{B}$ if $\m{A}$ is a subalgebra of the $\mathcal{L}'$-reduct of $\m{B}$. For simplicity, when $\mathcal{L}'$ is the monoid (resp.\ group, lattice or semilattice) signature, we sometimes refer to $\m{A}$ as a submonoid (resp.\ subgroup, sublattice or subsemilattice) of the $\mathcal{L}$-algebra $\m{B}$.

A \emph{residuated lattice} is an algebra $\m{L} = \langle L, \mt, \jn, \cdot, \ld, \rd, \ut \rangle$, where $\langle L, \cdot, \ut \rangle$ is a monoid, $\langle L, \mt, \jn \rangle$ is a lattice, and $\ld$ and $\rd$ are binary operations such that, for all $a, b, c \in {L}$, 
\begin{equation}\label{eq:residual1}
ab \le c \,\,\,\Longleftrightarrow\,\,\, a \le c \rd b \,\,\,\Longleftrightarrow\,\,\, b \le a \ld c,
\end{equation}
where $ab$ stands for the product $a\cdot b$, and $\le$ is the lattice order. We write $\ut$ for the monoid identity. The operations $\ld$ and $\rd$ are referred to as {\em left residual} and {\em right residual} of $\cdot$, respectively. We refer to $a$ as the {\em denominator} of $a \ld b$ (resp.\ $b \rd a$), and to $b$ as the {\em numerator} of $a \ld b$ (resp.\ $b \rd a$). Condition (\ref{eq:residual1}) is equivalent to $\cdot$ being order-preserving in each argument and, for every $a, b \in {L}$, the sets 
\begin{equation}\label{eq:residual2}
\{c \in {L} \mid a \cdot c \le b\}\quad\text{ and }\quad\{c \in {L} \mid c \cdot a \le b\}
\end{equation} 
containing greatest elements $a \ld b$ and $b \rd a$, respectively. Residuated lattices form a variety denoted by $\Rl$~\cite{bahls2003cancellative,BT03}. Throughout, we often write $t \le s$ for the equation $t \mt s \eq t$.

We recall here some relevant standard facts.

\begin{proposition}\label{prop:facts1}
The monoid operation $\cdot$ of any residuated lattice preserves all existing joins in each argument. The residuals $\ld$ and $\rd$ preserve all existing meets in the numerator, and convert existing joins in the denominator into meets. Consequently, residuals preserve order in the numerator, and reverse order in the denominator.
\end{proposition}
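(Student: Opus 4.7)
The plan is to derive all four assertions directly from the adjunction encoded in condition~(\ref{eq:residual1}), by showing in each case that the two sides of the claimed equality have the same set of upper bounds (resp.\ lower bounds), and then invoking the fact that a join (resp.\ meet), when it exists, is uniquely determined by the set of its upper (resp.\ lower) bounds.

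For the first claim, suppose $\bigvee_{i\in I} a_i$ exists in $L$ and let $b,c\in L$. I would compute
\[
b\cdot\Bigl(\bigvee_{i} a_i\Bigr)\le c \iff \bigvee_{i} a_i\le b\ld c \iff (\forall i)\; a_i\le b\ld c \iff (\forall i)\; b\cdot a_i\le c,
\]
using (\ref{eq:residual1}) and the definition of join in the first and third steps. Thus $b\cdot\bigvee a_i$ is a least upper bound of $\{b\cdot a_i\}$, which proves both existence of $\bigvee(b\cdot a_i)$ and the equality $b\cdot\bigvee a_i=\bigvee(b\cdot a_i)$. The analogous argument on the other side (replacing $b\ld c$ with $c\rd b$) handles joins in the right argument of $\cdot$.

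For the preservation of meets by the residuals, suppose $\bigwedge_i b_i$ exists. For any $c$,
\[
c\le a\ld \Bigl(\bigwedge_i b_i\Bigr) \iff a\cdot c\le \bigwedge_i b_i \iff (\forall i)\; a\cdot c\le b_i \iff (\forall i)\; c\le a\ld b_i,
\]
so $a\ld\bigwedge b_i$ is a greatest lower bound of $\{a\ld b_i\}$; the symmetric argument works for $(\bigwedge b_i)\rd a$. For the conversion of joins into meets in the denominator, suppose $\bigvee_i a_i$ exists. Then
\[
c\le \Bigl(\bigvee_i a_i\Bigr)\ld b \iff \Bigl(\bigvee_i a_i\Bigr)\cdot c\le b \iff (\forall i)\; a_i\cdot c\le b \iff (\forall i)\; c\le a_i\ld b,
\]
where the middle equivalence follows from $\bigvee a_i\le b\rd c \iff (\forall i)\, a_i\le b\rd c$; note that here it is \emph{not} necessary to know that $\bigvee(a_i c)$ exists. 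The same computation with the roles of left and right interchanged gives the statement for $b\rd\bigvee a_i$.

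Finally, the monotonicity properties are immediate corollaries: if $b_1\le b_2$ then $b_1 \wedge b_2 = b_1$, so $a\ld b_1 = a\ld(b_1\wedge b_2)=(a\ld b_1)\wedge(a\ld b_2)\le a\ld b_2$, and likewise for the right residual and the numerator arguments; the antitonicity in the denominator follows in the same way by rewriting $a_1\le a_2$ as $a_1\vee a_2=a_2$ and using that joins in the denominator convert to meets. No step is a genuine obstacle; the only point requiring mild care is to phrase each equivalence chain so that the existence of the join/meet on the right follows from, rather than is presupposed by, the existence on the left.
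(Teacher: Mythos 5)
Your proof is correct: each claim is derived from the adjunction (1) by matching sets of upper/lower bounds, which is exactly the standard argument the paper relies on (it states this proposition without proof as a recalled fact, citing the survey references). The one point worth care---that existence of the join/meet on the right-hand side is a conclusion rather than a hypothesis---is handled properly in your equivalence chains.
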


\begin{proposition}\label{prop:facts}
Every residuated lattice satisfies the equations 
\[
x \ld (y \rd z) \eq (x \ld y) \rd z, \quad x \rd yz \eq (x \rd z) \rd y, \quad xy \ld z  \eq y \ld (x \ld z),
\]
\end{proposition}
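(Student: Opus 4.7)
The plan is to establish each of the three equations by the universal property of the residuals: for every element $a$ of the residuated lattice, I will show that $a \leq \text{LHS}$ if and only if $a \leq \text{RHS}$, and conclude by antisymmetry of the lattice order that the two sides coincide. The only tools required are the adjunction biconditional~(\ref{eq:residual1}) together with the associativity of the monoid operation $\cdot$.

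For the first equation $x \ld (y \rd z) \eq (x \ld y) \rd z$, I would fix $a$ and argue that $a \leq x \ld (y \rd z)$ iff $xa \leq y \rd z$ iff $(xa)z \leq y$ iff $x(az) \leq y$ (by associativity) iff $az \leq x \ld y$ iff $a \leq (x \ld y) \rd z$. The second equation $x \rd yz \eq (x \rd z) \rd y$ is handled in the same spirit: $a \leq x \rd yz$ iff $a(yz) \leq x$ iff $(ay)z \leq x$ (associativity) iff $ay \leq x \rd z$ iff $a \leq (x \rd z) \rd y$. The third equation $xy \ld z \eq y \ld (x \ld z)$ is its mirror image: $a \leq xy \ld z$ iff $(xy)a \leq z$ iff $x(ya) \leq z$ (associativity) iff $ya \leq x \ld z$ iff $a \leq y \ld (x \ld z)$.

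In each case the chain of biconditionals holds for every $a$, so both sides of the displayed equation have the same principal downset; taking $a$ to be either side in turn yields the two inequalities that, by antisymmetry, give equality. There is no real obstacle here: each identity unfolds into an iterated product inequality via~(\ref{eq:residual1}), associativity renders the two parenthesizations interchangeable, and the adjunction is then reversed in the opposite order. The only point that demands minor care is notational, namely keeping track of whether the factor peeled off at each step sits on the left or on the right of the product, so that $\ld$ and $\rd$ are applied in the correct slots.
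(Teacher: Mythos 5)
Your proof is correct: each chain of biconditionals is a valid application of the adjunction~(\ref{eq:residual1}) together with associativity, and the final antisymmetry step (both sides have the same principal downset) is sound. The paper states this proposition without proof, recalling it as a standard fact, and your argument is precisely the standard one.
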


For any residuated lattice $\m{L}$, the set ${L}^- = \{a \in L \mid a \le \ut\}$ of negative elements of ${L}$ (including the monoid identity) is its \emph{negative cone}. It is the universe of a submonoid, and a sublattice of $\m{L}$, and it can be made into a residuated lattice, by defining $\ld_{\m{L}^-}$ and $\rd_{\m{L}^-}$ as 
\begin{equation}\nonumber
a \ld_{\m{L}^-} b \coloneqq a \ld b \mt \ut
\end{equation}
\begin{equation}\nonumber
\,\,a \rd_{\m{L}^-} b \coloneqq a \rd b \mt \ut,
\end{equation} 
for $a, b \in {L}^-$. Residuated lattices satisfying $x \land \ut \eq x$ are called {\em integral}. The class of integral residuated lattices can be equivalently defined relative to $\Rl$ by the equations 
\begin{equation}\label{eq:integrality}
x\ld\ut \eq \ut \eq \ut\rd x.
\end{equation}  

We call a residuated lattice \emph{cancellative} if its monoid reduct is a cancellative monoid. The class of cancellative residuated lattices is a variety (see~\cite[Lemma 2.5]{bahls2003cancellative}) defined relative to $\Rl$ by the equations: 
\begin{equation}\label{eq:cancellative}
xy\rd y \eq x \eq y\ld yx.
\end{equation} 

Replacing $x$ by $\ut$ in the preceding equation, we get:

\begin{proposition}\label{prop:xbelowx}
The equations $x \ld x \eq \ut \eq x \rd x$ hold in any cancellative residuated lattice.
\end{proposition}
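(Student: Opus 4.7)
The plan is to do exactly what the parenthetical remark preceding the proposition suggests: specialise the cancellativity equations to the case where one factor is the monoid identity. Concretely, I would start from the two equations in~(\ref{eq:cancellative}), namely $xy \rd y \eq x$ and $y \ld yx \eq x$, which are axiomatic for the variety of cancellative residuated lattices.

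For the right-residual identity, substitute $x \coloneqq \ut$ into $xy\rd y \eq x$. Since $\ut$ is the monoid identity we have $\ut \cdot y = y$, so the equation collapses to $y \rd y \eq \ut$. Relabelling the variable yields $x \rd x \eq \ut$.

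For the left-residual identity, substitute $x \coloneqq \ut$ into $y \ld yx \eq x$. Using $y \cdot \ut = y$, this reduces to $y \ld y \eq \ut$, and once again relabelling gives $x \ld x \eq \ut$. Combining the two specialisations produces the desired chain $x \ld x \eq \ut \eq x \rd x$.

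There is no real obstacle here: the argument is a one-line substitution into each of the two cancellativity equations, exploiting only the defining property of the monoid unit $\ut$. The only thing to be mindful of when writing up is to apply each substitution to the correct side so that the denominator $y$ of $y\rd y$ (respectively $y\ld y$) is the one that survives after $\ut$ is absorbed by the monoid product.
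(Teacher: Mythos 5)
Your proof is correct and is exactly the argument the paper intends: the sentence preceding the proposition (``Replacing $x$ by $\ut$ in the preceding equation'') refers to substituting $\ut$ for $x$ in the two cancellativity equations~(\ref{eq:cancellative}), which is precisely your one-line specialisation. Nothing further is needed.
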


A residuated lattice is said to be \emph{$\ut$-cyclic} if it satisfies the equation $x\ld\ut \eq \ut\rd x$.

\begin{proposition}\label{prop:cancecycl}
Any cancellative residuated lattice is $\ut$-cyclic.
\end{proposition}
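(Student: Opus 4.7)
The plan is to derive the equation $x \ld \ut \eq \ut \rd x$ in any cancellative residuated lattice directly from the defining identities of cancellativity (\ref{eq:cancellative}) together with the basic residuation machinery. The key observation is that in such a residuated lattice, the cancellative equations lift to cancellation of inequalities: from $uv \le wv$ one obtains (by residuation) $u \le wv \rd v = w$, and dually from $vu \le vw$ one obtains $u \le v \ld vw = w$. These two rules will do the main work.

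Fix $a \in L$ and set $b \coloneqq a \ld \ut$, so that $ab \le \ut$. The plan is to show $ba \le \ut$, which by residuation forces $b \le \ut \rd a$, i.e.\ $a \ld \ut \le \ut \rd a$. Compute
\[
(ba)(ba) \eq b(ab)a \le b \cdot \ut \cdot a \eq ba,
\]
so that $(ba)(ba) \le \ut \cdot (ba)$. Cancelling the common right factor $ba$ via the right-cancellation rule above yields $ba \le \ut$, as desired.

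A symmetric argument starting from $b' \coloneqq \ut \rd a$ (hence $b'a \le \ut$) gives $(ab')(ab') = a(b'a)b' \le ab' = (ab')\cdot \ut$, and left-cancellation of $ab'$ now produces $ab' \le \ut$, whence $b' \le a \ld \ut$. Combining both inclusions gives $a \ld \ut \eq \ut \rd a$.

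The only subtle point is the inequality-cancellation step; once one notices that the cancellativity equations $xy \rd y \eq x$ and $y \ld yx \eq x$ immediately upgrade (via residuation) to one-sided cancellation of inequalities, the argument is a short direct computation, so I do not anticipate any real obstacle.
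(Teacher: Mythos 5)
Your proof is correct: the cancellation-of-inequalities rules you derive are legitimate (from $uv \le wv$ residuation gives $u \le wv\rd v$, which equals $w$ by (\ref{eq:cancellative}), and dually), and the computation $(ba)(ba)=b(ab)a\le ba$ followed by right cancellation of $ba$ does yield $ba\le\ut$, hence $a\ld\ut\le\ut\rd a$, with the symmetric half giving the reverse inequality. However, your route is genuinely different from the paper's. The paper simply instantiates the mixed associativity law $x \ld (y \rd z) \eq (x \ld y) \rd z$ of Proposition~\ref{prop:facts} at $x=y=z=a$ to get $a\ld(a\rd a)=(a\ld a)\rd a$, and then applies Proposition~\ref{prop:xbelowx} ($x\ld x\eq\ut\eq x\rd x$) to both sides; this is a one-line argument that makes explicit that only the identities $x\ld x\eq\ut\eq x\rd x$ are needed, not full cancellativity. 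Your argument, as phrased, appeals to the full cancellative identities, though in fact the only instance you use is $w=\ut$, i.e.\ $(ba)\rd(ba)=\ut$ and $(ab')\ld(ab')=\ut$, so your proof also secretly works under the weaker hypothesis of Proposition~\ref{prop:xbelowx}; it is just longer and hides this. Both approaches are elementary and sound; the paper's buys brevity and transparency about the exact hypotheses used.
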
 

\begin{proof}
For any residuated lattice $\m{L}$ and $a \in {L}$, we have $a \ld (a \rd a) = (a \ld a) \rd a$ by Proposition~\ref{prop:facts}. Thus, by Proposition~\ref{prop:xbelowx}, if $\m{L}$ is cancellative, $a\ld\ut = \ut\rd a$ for every $a \in {L}$.
\end{proof}

If $\m{L}$ is a residuated lattice, we write $\mathcal{C}(\m{L})$ for the set of all convex subuniverses of $\m{L}$, ordered by set inclusion. Here, a {\em convex subuniverse} is an order-convex subuniverse of $\m{L}$. If $\m{L}$ is $\ut$-cyclic, $\mathcal{C}(\m{L})$ is a distributive lattice (see, e.g.,~\cite[Theorem 3.8]{BKLT16}). 

For any $S \subseteq {L}$, we write $\csgr{S}$ for the smallest convex subuniverse of $\m{L}$ containing $S$. As usual, we call $\csgr{S}$ the convex subuniverse {\em generated by} $S$, and write $\csgr{a}$ for $\csgr{\{a\}}$. We refer to $\csgr{a}$ as the {\em principal} convex subuniverse of $\m{L}$ generated by the element $a \in {L}$. If $\m{L}$ is a residuated lattice, and $a \in {L}$, the absolute value $\abs{a} \in {L}^-$ is defined as 
\[
a \mt (\ut\rd a) \mt \ut.
\]
Note that when $a \le \ut$, $\abs{a} = a$. 

If $S \subseteq {L}$, we write $\sgr{S}$ for the submonoid generated by $S$ in $\m{L}$. The following results are established in~\cite{BKLT16} (see\ Lemma 3.2, Corollary 3.3, and Lemma 3.6 in~\cite{BKLT16}).

\begin{lemma}\label{l:princconvex}
In any $\ut$-cyclic residuated lattice $\m{L}$, the followings hold:
\begin{itemize}
\item [{\rm (a)}] For any $S \subseteq {L}$, the convex subuniverse generated by $S$ is
\begin{align*}\smallskip
\csgr{S} = \csgr{\abs{S}} & = \{c \in {L} \mid t \le c \le t\ld \ut\text{, for some }t \in \sgr{\abs{S}}\} \\ 
	     & = \{c \in {L} \mid t \le \abs{c} \text{, for some }t \in \sgr{\abs{S}}\},
\end{align*}
where $\abs{S} \coloneqq \{\abs{s} \colon s \in S\}$. \smallskip

\item [{\rm (b)}] For any $a \in {L}$, the convex subuniverse generated by $a$ is
\begin{align*}\smallskip
\csgr{a} = \csgr{\abs{a}} & = \{c \in {L} \mid \abs{a}^n \le c \le \abs{a}^n\ld \ut \text{, for some }n \in \N\} \\ 
	     & =  \{c \in {L} \mid \abs{a}^n \le \abs{c} \text{, for some }n \in \N\}. 
\end{align*}
\item [{\rm (c)}] For any $a, b \in {L}$, $\csgr{\abs{a} \jn \abs{b}} = \csgr{a} \cap \csgr{b}$ and $\csgr{\abs{a} \mt \abs{b}} = \csgr{a} \jn \csgr{b}$.
\end{itemize}
\end{lemma}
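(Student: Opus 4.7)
The plan is to deduce (a) from three intermediate equalities, then treat (b) as a direct specialization and (c) as a short consequence of (b). Writing $T := \{c \in L \mid t \le c \le t\ld\ut \text{ for some } t \in \sgr{\abs{S}}\}$ and $T' := \{c \in L \mid t \le \abs{c} \text{ for some } t \in \sgr{\abs{S}}\}$, I will prove in sequence $\csgr{S} = \csgr{\abs{S}}$, $\csgr{\abs{S}} = T$, and $T = T'$.

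First I would dispatch the two routine equalities. For $\csgr{S} = \csgr{\abs{S}}$, one inclusion is automatic because $\abs{s} = s \mt (\ut \rd s) \mt \ut$ lies in $\csgr{S}$; for the reverse, $\abs{s} \le s$ is built into the definition, while $s \le \abs{s}\ld\ut$ follows from $\abs{s}\cdot s \le \ut$ by $\ut$-cyclicity (which rewrites this as $s \cdot \abs{s} \le \ut$), and convexity places $s$ inside $\csgr{\abs{S}}$. For $T = T'$, every $t \in \sgr{\abs{S}}$ satisfies $t \le \ut$, so $c \le t\ld\ut$ rewrites as $ct \le \ut$, equivalently by $\ut$-cyclicity as $t \le \ut\rd c$; conjoining this with $t \le c$ and $t \le \ut$ produces exactly $t \le \abs{c}$.

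The hard part is $\csgr{\abs{S}} \subseteq T$; the reverse inclusion is automatic from convexity, since both $t$ and $t\ld\ut$ already belong to $\csgr{\abs{S}}$. To establish it, I would show that $T$ is itself a convex subuniverse containing $\abs{S}$, then appeal to the minimality of $\csgr{\abs{S}}$. Convexity is built into the definition of $T$, the identity is witnessed by $t := \ut$, and each $\abs{s}$ by $t := \abs{s}$ (using $\abs{s}^2 \le \abs{s} \le \ut$). The genuine bookkeeping is closure under the binary operations. Given witnesses $t_1 \le c \le t_1\ld\ut$ and $t_2 \le d \le t_2\ld\ut$, the meet $c \mt d$ is sandwiched by $t := t_1 t_2$; the join $c \jn d$ and the product $c \cdot d$ require a symmetrized witness such as $t := t_2 t_1 t_2$ to compensate for the lack of commutativity; and the residuals $c \ld d$, $c \rd d$ require a still longer symmetrization like $t := t_1 t_2 t_1 t_2$, combined with the identity $x\ld(y\ld z) \eq yx\ld z$ from Proposition~\ref{prop:facts}. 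Each verification telescopes long products via the two $\ut$-cyclic identities $(t\ld\ut)\cdot t \le \ut$ and $t \cdot (t\ld\ut) \le \ut$ applied at consecutive positions. This routine case analysis, not the overall architecture, is where the labor concentrates.

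For (b), specializing to $S = \{a\}$ gives $\sgr{\abs{a}} = \{\abs{a}^n \mid n \in \N\}$ (the identity $\ut$ is absorbed into the $\N$-indexed family because $\abs{a}^n \le \ut$ holds for all $n$, so the condition $\abs{a}^n \le \abs{c}$ is automatically satisfied whenever $\abs{c} = \ut$). For (c), the inclusion $\csgr{\abs{a}\jn\abs{b}} \subseteq \csgr{a} \cap \csgr{b}$ follows from the sandwich $\abs{a} \le \abs{a}\jn\abs{b} \le \ut$, which places $\abs{a}\jn\abs{b}$ in each of $\csgr{a}, \csgr{b}$ by convexity. The reverse uses a short pigeonhole: if $\abs{a}^n, \abs{b}^m \le \abs{c}$, then after distributing $\cdot$ over $\jn$, every monomial in $(\abs{a}\jn\abs{b})^{n+m-1}$ must contain either at least $n$ factors of $\abs{a}$ or at least $m$ factors of $\abs{b}$, and replacing the remaining factors by $\ut$ via monotonicity bounds it by $\abs{a}^n$ or $\abs{b}^m$, hence by $\abs{c}$. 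Finally, $\csgr{\abs{a}\mt\abs{b}} = \csgr{a} \jn \csgr{b}$ is immediate from convexity applied in both directions.
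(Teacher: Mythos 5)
Your proposal is correct and follows the standard argument (the paper itself does not prove this lemma but cites \cite{BKLT16}, where the proof runs exactly along these lines: show the candidate set is a convex subuniverse containing $\abs{S}$ and appeal to minimality); your witnesses $t_1t_2$, $t_2t_1t_2$, $t_1t_2t_1t_2$ do work, since any product of negative elements containing $t_1t_2$ (resp.\ $t_2t_1$) as a contiguous subword lies below $t_1t_2$ (resp.\ $t_2t_1$), which handles both the lower bound and, via $(t_1\ld\ut)\,t_1\le\ut$ and $xy\ld z\eq y\ld(x\ld z)$, the upper bound. One small correction: convexity of $T$ is \emph{not} literally built into the definition, because $c_1\le c\le c_2$ comes with two different witnesses $t_1,t_2$, and you must merge them into the single witness $t_1t_2$ (exactly as in your meet case); likewise, the inclusion $\csgr{\abs{a}\mt\abs{b}}\subseteq\csgr{a}\jn\csgr{b}$ in (c) uses closure under $\mt$ rather than convexity.
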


If $\m{L}$ is a residuated lattice, and $a, b \in {L}$, we define 
\begin{equation}\label{eq:conjugates}
\lambda_{b}(a) \coloneqq (b \ld ab) \mt \ut \quad \text{ and } \quad \rho_{b}(a) \coloneqq (ba \rd b) \mt \ut,
\end{equation}
and refer to $\lambda_{b}(a)$ and $\rho_{b}(a)$ respectively as the {\em left} and {\em right conjugate} of $a$ by $b$. For any residuated lattice $\m{L}$, a convex subuniverse $H \in \mathcal{C}(\m{L})$ is said to be {\em normal} if for any $a \in {H}$ and any $b \in {L}$, $\lambda_b(a) \in {H}$ and $\rho_b(a) \in {H}$. It was proved in \cite[Theorem 4.12]{BT03} that the lattice $\mathcal{NC}(\m{L})$ of convex normal subuniverses of any residuated lattice $\m{L}$ is isomorphic to its congruence lattice $\cng{(\m{L})}$.

A {\em lattice-ordered group} (briefly, {\em \el group}) is an algebra $\m{G} = \langle G, \mt, \jn, \cdot, \iv{}, \ut \rangle$ such that $\langle G, \cdot, \iv{}, \ut \rangle$ is a group, $\langle G, \mt, \jn \rangle$ is a lattice, and the group operation distributes over the lattice operations. The class of \el groups is a variety. Here, it is identified with the term\nbd{-}equivalent subvariety $\LG$ of $\Rl$ defined by the equations
\[
x(x\ld \ut) \eq \ut \eq (\ut\rd x)x.
\]
The equivalence is given by $\iv{x} \coloneqq x \ld \ut = \ut \rd x$, $\iv{x}y \coloneqq x \ld y$, and $y\iv{x} \coloneqq y \rd x$. In any residuated lattice $\m{L}$, an element $a \in L$ is {\em invertible}, that is, it has a multiplicative inverse if 
\[
a(a \ld \ut) = \ut = (\ut \rd a)a.
\] 
Hence, the class of \el groups is identified with the class of those residuated lattices in which every element is invertible. 

Residuated lattices with a commutative monoid reduct are called {\em commutative} residuated lattices, and form a subvariety of $\Rl$. It is standard to call {\em Abelian} those $\ell$\nbd{-}groups whose underlying group is commutative. Here, a {\em monoid-subvariety of $\vty{V}$} is any variety defined relative to $\vty{V} \subseteq \Rl$ by monoid equations (e.g., commutative residuated lattices form a monoid-subvariety of $\Rl$). We also refer to $\vty{V}$ as a {\em monoid\nbd{-}variety}.

For any monoid $\m{M}$, we say that ${\le} \subseteq M \times M$ is a {\em partial order on $\m{M}$} if it is a partial order on $M$ and, for all $a, b, c, d \in M$, whenever $a \le b$, also $cad \le cbd$; if the order $\le$ is total, we call it a {\em total order on $\m{M}$}. If the total order is residuated, we say that $\m{M}$ {\em admits a residuated total order}, and we sometimes write $\langle \m{M}, \le \rangle$ for the resulting residuated lattice. It is immediate that any total order on (the monoid reduct of) a group is a residuated total order. Finally, a residuated lattice {\em admits a (residuated) total order} if its monoid reduct admits a residuated total order that extends its lattice order.


\section{Submonoids of Nilpotent Lattice-Ordered Groups}\label{s:Nilpotent}

The primary focus of this section is the quasivariety of submonoids of nilpotent $\ell$\nbd{-}groups. 
The main result of this section, Theorem~\ref{t:five}, provides a characterization of these monoids and, equivalently, of submonoids of nilpotent cancellative residuated lattices. In particular, a nilpotent monoid is a submonoid of a nilpotent \el group if and only if it is cancellative and has unique roots (in the sense to be defined below). 

A nilpotent group is one that has a finite central series. Given $c \in \N$, nilpotent groups of class $c$ (in short, $c$-nilpotent groups) are those possessing a central series of length at most $c$; they form a variety defined by the equation
\[
[[[x_1, x_2], \dots , x_c], x_{c+1}] \eq \ut.
\]
Thus, $1$-nilpotent groups coincide with Abelian groups, and every $c$-nilpotent group, $c \in \N$, is also $(c+1)$-nilpotent. 

Consider now the equation $\mathsf{L}_c \colon \, q_c(x,y,\bar{z}) \eq q_c(y,x,\bar{z})$, where $\bar{z}$ abbreviates a sequence of variables $z_1,z_2,\ldots\,$, and $q_c(x,y,\bar{z})$ is defined as follows, for $c \in \N$:
\begin{align*}
q_1(x,y,\bar{z}) & = xy \\ \smallskip
q_{c+1}(x,y,\bar{z}) & = q_{c}(x,y,\bar{z})z_{c}q_{c}(y,x,\bar{z}).
\end{align*}
The equation $\mathsf{L}_c$ characterizes $c$-nilpotent groups.

\begin{proposition}[\cite{NT63}, Corollary 1]\label{prop:nilplc}
A group is $c$-nilpotent if and only if it satisfies the equation $\mathsf{L}_c$.
\end{proposition}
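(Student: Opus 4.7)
The plan is to proceed by induction on the nilpotency class $c$. The base case $c=1$ is immediate: $\mathsf{L}_1$ is the equation $xy \eq yx$, which holds precisely in abelian (i.e., $1$-nilpotent) groups.

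For the inductive step, assume the statement for $c$ and fix a group $G$. Abbreviate $u \coloneqq q_c(x,y,\bar z)$ and $v \coloneqq q_c(y,x,\bar z)$, so that $\mathsf{L}_{c+1}$ reads $u z_c v \eq v z_c u$. For the forward direction, suppose $G$ is $(c+1)$-nilpotent, so that $\gamma_{c+2}(G) = \{\ut\}$ and therefore $\gamma_{c+1}(G) \subseteq Z(G)$, where $Z(G)$ denotes the center of $G$. Consequently $G/Z(G)$ is $c$-nilpotent, and the inductive hypothesis gives $G/Z(G) \models \mathsf{L}_c$, which is precisely the assertion that $u\iv{v} \in Z(G)$ for all substitutions in $G$. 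Writing $u = wv$ with $w \in Z(G)$, the centrality of $w$ yields $u z_c v = w v z_c v = v z_c v w = v z_c u$, as required.

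For the converse, suppose $G \models \mathsf{L}_{c+1}$. Specializing $z_c = \ut$ gives $uv = vu$, so $u$ and $v$ always commute. Manipulating $u z_c v = v z_c u$ by multiplying on the left by $\iv{v}$ and on the right by $\iv{v}$, then on the left by $\iv{z_c}$, produces $\iv{z_c}(\iv{v}u)z_c = u\iv{v}$; invoking $\iv{v}u = u\iv{v}$ shows that $u\iv{v}$ commutes with every $z_c \in G$. Thus $u\iv{v} \in Z(G)$ for all $x, y, \bar z$, which is exactly $G/Z(G) \models \mathsf{L}_c$; the inductive hypothesis then forces $\gamma_{c+1}(G) \subseteq Z(G)$, so $\gamma_{c+2}(G) = \{\ut\}$ and $G$ is $(c+1)$-nilpotent. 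The principal obstacle I foresee is this converse step: extracting centrality of $u\iv{v}$ from the single identity $\mathsf{L}_{c+1}$. The crucial observation is that $\mathsf{L}_{c+1}$ already entails (via $z_c = \ut$) that $u$ and $v$ commute, which is exactly what makes the conjugation identity derived from $\mathsf{L}_{c+1}$ collapse to $\iv{z_c}(u\iv{v}) z_c = u\iv{v}$; once this is isolated, the induction closes by the familiar equivalence between $(c+1)$-nilpotency of $G$ and $c$-nilpotency of $G/Z(G)$.
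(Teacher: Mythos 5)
Your proof is correct. Note that the paper offers no proof of this proposition---it is imported verbatim from Neumann and Taylor \cite{NT63}, Corollary 1---so there is nothing internal to compare against; your induction on $c$, which reduces $\mathsf{L}_{c+1}$ in $G$ to $\mathsf{L}_c$ in $G/Z(G)$ by showing that $\mathsf{L}_{c+1}$ is equivalent (in a group) to the centrality of $q_c(x,y,\bar z)\,\iv{q_c(y,x,\bar z)}$, is precisely the classical argument from the cited source. The one point worth making explicit is that $z_c$ occurs in $q_{c+1}$ but not in $u=q_c(x,y,\bar z)$ or $v=q_c(y,x,\bar z)$, so it may first be specialized to $\ut$ (yielding $uv=vu$) and then varied over all of $G$ independently of $x,y,z_1,\dots,z_{c-1}$; this is what licenses the conclusion that $u\iv{v}$ is central, and your argument uses it correctly.
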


\noindent We call a monoid {\em nilpotent of class $c$} (in short, $c$-nilpotent) if it satisfies $\mathsf{L}_c$, and call a residuated lattice {\em nilpotent of class $c$} (briefly, $c$-nilpotent) if its monoid reduct is $c$-nilpotent. The class of $c$-nilpotent residuated lattices is a monoid\nbd{-}variety of residuated lattices, and commutative residuated lattices coincide with $1$-nilpotent residuated lattices. We refer to a monoid (resp., residuated lattice) as {\em nilpotent} if it is $c$-nilpotent for some $c \in \N$.

\begin{example}
Clearly, every $c$-nilpotent $\ell$-group is a $c$-nilpotent residuated lattice. Similarly, the negative cone of a $c$-nilpotent $\ell$-group is a cancellative, integral $c$-nilpotent  residuated lattice. 
\end{example}

We present below an example of a commutative cancellative integral residuated chain, which is neither an $\ell$-group nor the negative cone of an $\ell$-group. 

\begin{example}
Let $\m{M}_1(x,y)$ be the free 2-generated commutative monoid over $\{x,y\}$. We consider the dual shortlex order, i.e., for words $t,s \in M_1(x,y)$ we have $t \le s$ iff $|t| > |s|$ or $|t|=|s|$ and $t <_{lex} s$ in the lexicographic order generated by $y < x$. For example,
\[
\ut > x > y > x^2 > xy > y^2 > x^3 > x^2y > xy^2 > y^3 > \dots
\]
Then $\m{M}_1(x,y)$ equipped with the considered order is an integral commutative cancellative residuated chain. However, it is not the negative cone of an $\ell$-group. Indeed, it is easy to see that the negative cone of an $\ell$-group satisfies the divisibility equation $(w\rd z)z \approx w \wedge z$ (see \cite[Corollary 6.3]{bahls2003cancellative}), whereas we have $(y \rd x)x=x^2 = y = x \wedge y$ in $\m{M}_1(x,y)$.
\end{example}

We also provide an example of a non-commutative nilpotent cancellative integral residuated chain, which is neither an $\ell$-group nor the negative cone of an $\ell$-group. 

\begin{example}
Let ${\m F}_2(x,y)$ be free 2\nbd{-}nilpotent group over $\{x,y\}$ and let ${\m S}_2(x,y)$ be the submonoid generated by $\{x,y\}$. The group ${\m F}_2(x,y)$ is isomorphic to the group ${\rm UT}_3(\Z)$ of unitriangular matrices (see \cite[Exercise 16.1.3]{KarM79}); the isomorphism, obtained by extending the variable assignment
\[
x\, \longmapsto \,
\begin{pmatrix}
1 \,\,\, & 0 \,\,\, & 0 \\
0 \,\,\, & 1 \,\,\, & 1 \\
0 \,\,\, & 0 \,\,\, & 1
\end{pmatrix}\,; \quad 
y\, \longmapsto \,
\begin{pmatrix}
1 \,\,\, & 1 \,\,\, & 0 \\
0 \,\,\, & 1 \,\,\, & 0 \\
0 \,\,\, & 0 \,\,\, & 1
\end{pmatrix}\,.
\]
can be also described on each element $x^{\alpha}y^{\beta}[x,y]^{\gamma}$ of $F_2(x,y)$, for $\alpha,\beta,\gamma \in \Z$, as follows:
\[
x^{\alpha}y^{\beta}[x,y]^{\gamma} \,\,\,\longmapsto\,\,\, 
\begin{pmatrix}
    1 \,\,\, & \beta \,\,\, & \gamma  \\
    0 \,\,\, & 1       \,\,\, & \alpha   \\
    0 \,\,\, & 0       \,\,\, & 1  
\end{pmatrix}\,\,
\]
\noindent 
Through this isomorphism, the monoid ${\m S}_2(x,y)$ is isomorphic to the submonoid of ${\rm UT}_3(\Z)$ whose underlying set is
\[
\{\,A \in {\rm UT}_3(\Z) \mid \alpha,\beta,\gamma \in \Z^+ \,\,\text{ and }\,\,\gamma \le \alpha\beta\,\}.
\]

\noindent We consider the following total order on ${\m S}_2(x,y)$ induced by the lexicographic order on the triples $( \alpha,\beta,\gamma )$: if we identify $a$ with $( \alpha_1,\beta_1,\gamma_1 )$ and $b$ with $( \alpha_2,\beta_2,\gamma_2 )$, define
\[
a \le^* b \quad \Longleftrightarrow \quad ( \alpha_1,\beta_1,\gamma_1 ) \ge_{lex} ( \alpha_2,\beta_2,\gamma_2 ).
\]
The monoid ${\m S}_2(x,y)$ equipped with this order is a nilpotent, cancellative, integral residuated chain that is neither an $\ell$-group nor the negative cone of an $\ell$-group. Indeed, as was noted above the negative cone of an $\ell$-group satisfies the law $(w \rd z)z \approx w \mt z$ (see \cite[Corollary 6.3]{bahls2003cancellative}), whereas here $(x \rd y)y = xy \neq x = x \mt y$.
\end{example}

A monoid $\m{M}$ is {\em right-reversible} if ${M}a \cap {M}b \neq \emptyset$, for all $a, b \in {M}$. A {\em group of (left) quotients} for a monoid $\m{M}$ is a group $\m{G}$ that has $\m{M}$ as a submonoid, and such that every $c \in {G}$ is of the form $c=\iv{a}b$ for some $a, b \in {M}$. By a classical result due to Ore (see, e.g.,~\cite[Section 1.10]{clifford1961algebraic},~\cite{dubreil1943}), a cancellative monoid $\m{M}$ has a group of quotients (unique up to isomorphism) if and only if $\m{M}$ is right-reversible.

We call a right-reversible cancellative monoid {\em Ore}, and write $\m{G}(\m{M})$ for its group of quotients. Further, we call a residuated lattice {\em Ore} if its monoid reduct is Ore. 

\begin{proposition}[\cite{NT63}, Theorem 1]\label{prop:nilp}
A cancellative monoid has a $c$-nilpotent group of quotients if and only if it satisfies the equation $\mathsf{L}_c$.
\end{proposition}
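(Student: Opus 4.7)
My plan for the proof is as follows. The forward direction is immediate: if $\m{M}$ has a $c$-nilpotent group of quotients $\m{G}$, then by Proposition~\ref{prop:nilplc}, $\m{G}$ satisfies $\mathsf{L}_c$; since $\mathsf{L}_c$ is an equation in the monoid signature and $\m{M}$ is a submonoid of $\m{G}$, the equation is inherited by $\m{M}$.

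For the converse, I would assume $\m{M}$ is cancellative and satisfies $\mathsf{L}_c$, and split the argument into two steps. First, to invoke Ore's theorem, I need to show that $\m{M}$ is right-reversible. A straightforward induction on $c$ shows that the term $q_c(x, y, \bar{z})$, viewed as a word in the free monoid on $\{x, y\} \cup \{z_i\}$, begins with the letter $x$ and ends with $y$ if $c$ is odd, or with $x$ if $c$ is even. By the symmetry $x \leftrightarrow y$, $q_c(y, x, \bar{z})$ ends with the opposite letter. Writing both sides of $\mathsf{L}_c$ as $w_1 v_1 = w_2 v_2$ with $\{v_1, v_2\} = \{x, y\}$ and substituting $x := a$, $y := b$ for arbitrary $a, b \in M$ (with the $z_i$ set to any convenient monoid elements, say all equal to $\ut$), one obtains $s, t \in M$ with $sa = tb$. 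Hence $Ma \cap Mb \neq \emptyset$, and Ore's theorem produces the group of quotients $\m{G}(\m{M})$.

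The second, more delicate, step is showing that $\m{G}(\m{M})$ itself satisfies $\mathsf{L}_c$; Proposition~\ref{prop:nilplc} will then deliver the desired $c$-nilpotency. Given any substitution of the variables of $\mathsf{L}_c$ by group elements of the form $a^{-1} b$ with $a, b \in M$, I would iteratively apply the Ore condition to bring both sides of the substituted equation to the form $p \cdot X = p \cdot Y$ with a common left factor $p \in M$ and monoid expressions $X, Y \in M$ that together constitute an instance of $\mathsf{L}_c$ in $\m{M}$; cancellativity would then transfer the resulting monoid equality back to an equality in $\m{G}(\m{M})$. I expect the main obstacle to be the combinatorial bookkeeping in this denominator-clearing procedure: one must ensure that the recursive pattern defining $q_c$ is preserved under the rewriting, so that the monoid identity one ultimately needs to verify in $\m{M}$ is indeed an instance of the hypothesis $\mathsf{L}_c$ rather than some unrelated equation.
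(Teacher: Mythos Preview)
The paper does not supply its own proof of this proposition: it is quoted verbatim as Theorem~1 of Neumann--Taylor~\cite{NT63}, so there is nothing in the present paper to compare your attempt against. What follows is therefore an assessment of your sketch on its own merits, with reference to the original source.

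Your forward direction and your right-reversibility argument (Step~1) are correct. The observation that $q_c(x,y,\bar z)$ always begins with $x$ and that $q_c(x,y,\bar z)$ and $q_c(y,x,\bar z)$ end in opposite letters from $\{x,y\}$ is exactly the mechanism Neumann and Taylor use to extract the Ore condition from $\mathsf{L}_c$; substituting $z_i \coloneqq \ut$ is harmless.

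Step~2 is where the content lies, and here your proposal is a statement of intent rather than a proof. You write that you would ``iteratively apply the Ore condition'' to bring a $\m G(\m M)$-instance of $\mathsf{L}_c$ to the form $pX = pY$ with $X,Y$ an $\m M$-instance of $\mathsf{L}_c$, but you have not explained why the rewriting can be organised so that the recursive pattern of $q_c$ survives. This is not mere bookkeeping: a generic common-denominator procedure will produce an equality between two monoid words that need \emph{not} be a substitution instance of $\mathsf{L}_c$. Neumann and Taylor handle this by an induction that exploits the particular shape $q_{c+1}(x,y,\bar z) = q_c(x,y,\bar z)\,z_c\,q_c(y,x,\bar z)$, replacing fractions one variable position at a time and using the free variable $z_c$ to absorb the accumulated left factors. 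Your sketch does not yet contain this idea, and without it the argument has a genuine gap: you would need to show, concretely, how clearing a single inverse from (say) the $x$-slot of $q_c$ yields, after left-multiplication by some $p \in M$, another instance of $q_c$ with arguments in $M$ in that slot.
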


\noindent The preceding result implies in particular that all nilpotent cancellative residuated lattices are Ore. 

The categorical equivalence in~\cite{MT10} provides a bridge between nilpotent cancellative residuated lattices and nilpotent \el groups. Recall that a function $\sigma \colon \m{P} \to \m{P}$ on a poset $\m{P} = \langle P, \le \rangle$ is a {\em co-closure operator} if it is order-preserving ($x \le y$ entails $\sigma(x) \le \sigma(y)$), contracting ($\sigma(x) \le x$), and idempotent ($\sigma(\sigma(x)) =\sigma(x)$). The image of $\sigma$ will be denoted by $\m{P}_{\sigma}$. We say that a co-closure operator $\sigma$ on a poset $\m{P}$ is a {\em conucleus} if $\sigma(\ut) = \ut$ and $\sigma(x)\sigma(y) \le \sigma(xy)$. If $\m{L} = \langle L, \mt, \jn, \cdot, \ld, \rd, \ut \rangle$ is a residuated lattice and $\sigma$ a conucleus on $\m{L}$, then the image $\m{L}_{\sigma}$ is a join-subsemilattice and a submonoid of $\m{L}$. It can be made into a residuated lattice, with operations $\mt_{\sigma}$, $\ld_{\sigma}$, and $\rd_{\sigma}$, defined by
\[
a \mt_{\sigma} b \coloneqq \sigma(a \mt b)\,, \quad \quad
a \ld_{\sigma} b  \coloneqq  \sigma(a \ld b)\,, \quad \quad
a \rd_{\sigma} b  \coloneqq  \sigma(a \rd b),
\]
for any $a,b \in {L}_{\sigma}$ (see~\cite[Lemma 3.1]{MT10}).

Let $\LGcn$ be the category with objects $\langle \m{G}, \si \rangle$ consisting of an \el group $\m{G}$ augmented with a conucleus $\si$ such that the underlying group of the \el group $\m{G}$ is the group of quotients of the monoid reduct of $\si[\m{G}]$. The morphisms of $\LGcn$ are \el groups homomorphisms that commute with the conuclei. The category $\mathcal{ORL}$ of Ore residuated lattices and residuated lattice homomorphisms was shown to be equivalent to $\LGcn$~\cite[Theorem 4.9]{MT10}. The results collected here suffice to provide a restriction of this equivalence to the category $\Nrl$ of $c$-nilpotent cancellative residuated lattices and residuated lattice homomorphisms, and the full subcategory $\NLGcn$ of $\LGcn$ consisting of objects whose first component is a $c$-nilpotent \el group.

We will not make use of the full categorical equivalence, but keep in mind the following key idea: Every nilpotent cancellative residuated lattice $\m{L}$ (of class $c$) `sits' inside a uniquely determined nilpotent \el group $\m{G}(\m{L})$ (of class $c$) as a submonoid, and as a join-subsemilattice. Further, $\m{L}$ can be seen as the image of $\m{G}(\m{L})$ relative to a suitable conucleus.

\begin{theorem}[\cite{MT10}, Lemmas 4.2 - 4.3 - 4.4]\label{t:catequiv}
Let $\m{L}$ be a $c$-nilpotent cancellative residuated lattice. If $\le$ denotes the partial order of $\m{L}$, then the binary relation ${\preceq} \subseteq \m{G}(\m{L}) \times \m{G}(\m{L})$ defined, for $a,b,c,d \in {L}$, by
\[
\iv{a}b \preceq \iv{c}d\text{ iff there exist }m,n \in {L}\text{ such that }mb \le nd\text{ and }ma=nc,
\]
is the unique partial order on $\m{G}(\m{L})$ that extends $\le$ and makes $\m{G}(\m{L})$ into a $c$-nilpotent \el group. Further, the map 
\[
\sigma_{\m{L}} \colon \m{G}(\m{L}) \to \m{G}(\m{L})\,; \quad \quad \sigma_{\m{L}}(\iv{a}b) = a \ld b, \,\,\,\text{for all }a,b \in {L},
\] 
is a conucleus on $\m{G}(\m{L})$ and $\m{L} = \m{G}(\m{L})_{\si_{\m{L}}}$.
\end{theorem}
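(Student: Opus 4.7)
The existence of $\m{G}(\m{L})$ as a $c$-nilpotent group is already guaranteed by Proposition~\ref{prop:nilp} together with the observation (noted immediately after that proposition) that every nilpotent cancellative residuated lattice is Ore. The plan is to establish three claims in sequence: (i) $\preceq$ is a well-defined partial order on $\m{G}(\m{L})$ that extends $\le$ and makes $\m{G}(\m{L})$ into a $c$-nilpotent $\ell$-group; (ii) such an order is unique; (iii) $\sigma_{\m{L}}$ is a conucleus on $\m{G}(\m{L})$ with image $\m{L}$.

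For (i), the central technical tool is the reduction to a common left denominator. Given $\iv{a}b, \iv{c}d \in \m{G}(\m{L})$, Ore's condition supplies $m,n \in L$ with $ma = nc$; setting $e \coloneqq ma = nc$, one rewrites $\iv{a}b = \iv{e}(mb)$ and $\iv{c}d = \iv{e}(nd)$. Any translation-compatible order on the group makes $\iv{a}b \preceq \iv{c}d$ equivalent to $mb \preceq nd$ (left-multiply by $e$), which on $\m{L}$ should coincide with $mb \le nd$; this is precisely the defining condition of $\preceq$. From this vantage point, well-definedness, reflexivity, antisymmetry and transitivity of $\preceq$ each reduce to combining Ore's property with cancellativity. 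The lattice operations are then defined by $\iv{a}b \jn \iv{c}d \coloneqq \iv{e}(mb \jn nd)$ and dually for meet; one checks independence from the choice of common denominator via cancellativity, and compatibility with left and right group multiplication via Ore together with the identities of Proposition~\ref{prop:facts}. Nilpotency of the resulting $\ell$-group of class $c$ follows because $\m{L}$ satisfies the monoid equation $\mathsf{L}_c$, hence so does $\m{G}(\m{L})$ (as the group it generates), so Proposition~\ref{prop:nilplc} applies.

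For (ii), any order $\preceq'$ with the same properties is invariant under left- and right-translation, so after bringing $\iv{a}b$ and $\iv{c}d$ to common denominator $e$ and left-translating, the relation $\iv{a}b \preceq' \iv{c}d$ reduces to $mb \preceq' nd$ between elements of $L$, which by the extension hypothesis equals $mb \le nd$, i.e., the defining condition of $\preceq$. For (iii), well-definedness of $\sigma_{\m{L}}(\iv{a}b) = a \ld b$ reduces to checking that $\iv{a}b = \iv{a'}b'$ implies $a \ld b = a' \ld b'$; this follows from the residuation identities of Proposition~\ref{prop:facts} and cancellativity. The conucleus axioms are then verified as follows: $\sigma_{\m{L}}$ is contracting since $a(a \ld b) \le b$ translates to $a \ld b \preceq \iv{a}b$; it is idempotent and preserves $\ut$ because $\sigma_{\m{L}}(x) = \ut \ld x = x$ for $x \in L$ by Proposition~\ref{prop:xbelowx}; it is order-preserving by standard residuation reasoning combined with the common-denominator technique; and it is submultiplicative because the inequality $(a \ld b)(c \ld d) \le ac \ld bd$ holds in any residuated lattice. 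Finally, $\m{L} = \m{G}(\m{L})_{\sigma_{\m{L}}}$ because $\sigma_{\m{L}}$ lands in $L$ by construction and fixes $L$ pointwise.

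The main obstacle is the logical interlocking of these steps. The definition of $\preceq$ presupposes a meaningful notion of common denominator (Ore); the lattice operations presuppose independence from that choice; the $\ell$-group axioms presuppose the compatibility of all these with group multiplication; and the conucleus properties of $\sigma_{\m{L}}$ presuppose both the $\ell$-group structure and well-definedness of the residual under change of representative. Each independence lemma is an exercise in carefully orchestrating Ore's right-reversibility, cancellativity, and the identities of Proposition~\ref{prop:facts}, and bundling them in the right order is the genuinely delicate part of the argument.
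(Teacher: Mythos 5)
The paper does not actually prove this theorem: it is imported wholesale from \cite{MT10} (Lemmas 4.2--4.4), so there is no internal argument to compare against. Your overall architecture --- common left denominators via the Ore condition, definition and uniqueness of $\preceq$ through translation invariance, then verification of the conucleus axioms --- does match the strategy of the cited source. However, two of your concrete steps would fail as written.

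First, \enquote{and dually for meet} is incorrect: $\m{L}$ is only a join-subsemilattice of $\m{G}(\m{L})$, not a sublattice, so $\iv{e}(mb \mt_{\m{L}} nd)$ is in general strictly below the meet of $\iv{e}(mb)$ and $\iv{e}(nd)$ in $\m{G}(\m{L})$ --- this is the whole point of the conucleus, whose image carries the deformed meet $a \mt_{\sigma} b = \sigma(a \mt b)$. The meet must instead come for free from the group structure via $x \mt y = (\iv{x} \jn \iv{y})^{-1}$ once all binary joins are shown to exist. Second, your justification of submultiplicativity rests on the inequality $(a \ld b)(c \ld d) \le ac \ld bd$, which is false in non-commutative residuated lattices (in a non-commutative $\ell$-group, setting the numerators to $\ut$ reduces it to $xu \le ux$ for all $x,u$); moreover $\iv{a}b \cdot \iv{c}d$ is not of the form $\iv{(ac)}(bd)$, so this is not even the inequality one needs. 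Both defects, together with the well-definedness of $\sigma_{\m{L}}$ that you only gesture at, are repaired by one observation you never isolate: for $w \in L$ one has $w \preceq \iv{a}b$ if and only if $aw \le b$ (using the order-cancellation $mv \le mp \Leftrightarrow v \le p$, which follows from $y \ld yx \eq x$), so that $a \ld b$ is precisely the greatest element of $L$ lying $\preceq$-below $\iv{a}b$. From this characterization, $\sigma_{\m{L}}$ is manifestly well defined, monotone, contracting, idempotent and unit-preserving; submultiplicativity follows because $L$ is a submonoid and multiplication in $\m{G}(\m{L})$ preserves $\preceq$; and $\m{G}(\m{L})_{\sigma_{\m{L}}} = \m{L}$ is immediate.
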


The main result of this section, Theorem~\ref{t:five} below, characterizes those cancellative monoids that embed into nilpotent \el groups, and into nilpotent cancellative residuated lattices. Before we proceed with its proof, we recall a few pertinent properties of nilpotent groups. In what follows, a monoid $\m{M}$ (or a group) is said to have {\em unique roots} if, whenever $a,b \in M$, and $a^n = b^n$ for some $n \in \N$, then $a=b$.

\begin{lemma}[\cite{KarM79}, Theorems 16.2.3 - 16.2.7 - 16.2.8]\label{l:karg}
The following properties hold in any nilpotent group $\m{G}$.
\begin{itemize}
\item [{\rm (a)}] Every nontrivial normal subgroup of $\m{G}$ intersects the center nontrivially.\smallskip

\item [{\rm (b)}] The set of torsion elements of $\m{G}$ is a normal subgroup of $\m{G}$.\smallskip

\item [{\rm (c)}] If $\m{G}$ is torsion-free, it has unique roots.
\end{itemize}
\end{lemma}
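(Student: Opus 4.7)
My plan is to prove the three assertions by exploiting the upper central series $1 = Z_0 \trianglelefteq Z_1 \trianglelefteq \cdots \trianglelefteq Z_c = \m{G}$ together with elementary commutator identities; this is the standard toolkit and handles all three items, although (b) and (c) rest on genuinely delicate commutator bookkeeping.

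For part (a), let $N$ be a nontrivial normal subgroup of $\m{G}$ and pick the smallest index $i \geq 1$ with $N \cap Z_i \neq 1$; such an $i$ exists because $Z_c$ is all of $\m{G}$. Take a nontrivial $n \in N \cap Z_i$. For every $g$ in $\m{G}$ the commutator $[n,g]$ lies in $Z_{i-1}$ by the defining property $[Z_i,\m{G}] \subseteq Z_{i-1}$, and it lies in $N$ by normality of $N$. Minimality of $i$ forces $[n,g]=1$ for all $g$, so $n$ is central, and therefore $i=1$, which is exactly the claim.

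For part (b), I would proceed by induction on the nilpotency class $c$. The base case $c=1$ is the familiar abelian fact. For the inductive step the cleanest route is via primary components: one shows that for every prime $p$, the set of $p$-power torsion elements is a subgroup, by proving inductively that in any finitely generated nilpotent group the torsion subgroup is finite and each $p$-primary part is characteristic. The torsion subset of $\m{G}$ is then the union of these primary pieces, closed under products and inverses, and normal because conjugation preserves the order of an element. The main obstacle is the structural fact that a finitely generated nilpotent torsion group is finite, which itself is proved by inducting on the class using $\m{G}/Z(\m{G})$.

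For part (c), I would first establish the auxiliary lemma that when $\m{G}$ is torsion-free nilpotent, each quotient $\m{G}/Z_i$ is again torsion-free. The core is a commutator computation: if $a^n \in Z_{i-1}$ then, modulo $Z_{i-2}$, the element $[a,g]^n$ equals a commutator involving a power already known to lie in $Z_{i-1}$, so iterating the argument down the central series forces $a \in Z_{i-1}$. Granting this, unique roots follow by induction on class: from $a^n = b^n$ one passes to $\bar{a}^n = \bar{b}^n$ in the torsion-free quotient $\m{G}/Z(\m{G})$, which has strictly smaller class, so by induction $\bar{a}=\bar{b}$; writing $a=bz$ with $z \in Z(\m{G})$ and comparing $n$-th powers yields $z^n=1$, and the torsion-free hypothesis gives $z=1$. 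The torsion-free quotient lemma is where the real work lies; once it is in hand, (c) is a short induction.
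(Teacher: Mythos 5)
The paper does not prove this lemma at all; it cites it verbatim from Kargapolov--Merzljakov (Theorems 16.2.3, 16.2.7, 16.2.8), so there is no in-paper argument to compare against and you are supplying the standard textbook proofs. Part (a) is complete and correct as written: taking the least $i$ with $N \cap Z_i \neq 1$ and observing that $[n,g] \in N \cap Z_{i-1} = 1$ is exactly the classical argument. Part (c) is also sound in outline: the induction on class via $a = bz$ with $z \in Z(\m{G})$ and $z^n = 1$ is right, and you correctly isolate the real content in the lemma that $\m{G}/Z(\m{G})$ is again torsion-free. For that lemma the cleanest formulation is: in a torsion-free nilpotent group, if $x^n$ commutes with $y$ then $x$ commutes with $y$ (induct on the class of $\langle x,y\rangle$; modulo its centre the two elements commute by the inductive hypothesis, so $[x,y]$ is central in $\langle x,y\rangle$, whence $1 = [x^n,y] = [x,y]^n$ and torsion-freeness kills $[x,y]$). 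Applying this with $a^n \in Z(\m{G})$ gives $a \in Z(\m{G})$ directly, which is tidier than tracking the whole upper central series.

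The one place you should be careful is part (b). Normality of the torsion set is immediate, since conjugation preserves the order of an element, and needs none of the machinery; the issue is closure under products. Your stated auxiliary fact, ``a finitely generated nilpotent torsion group is finite,'' is true but cannot be applied as you describe: for torsion elements $a,b$ the subgroup $\langle a,b\rangle$ is finitely generated and nilpotent, but it is not known in advance to be a torsion group --- that is essentially what is being proved. Likewise, speaking of ``the torsion subgroup'' of a finitely generated nilpotent group presupposes the conclusion. The auxiliary statement you actually need is: a nilpotent group generated by finitely many \emph{torsion elements} is finite. This does close the loop, and is proved by noting that $G/[G,G]$ is then a finite abelian group and that a nilpotent group with finite abelianization is finite (each lower central factor $\gamma_i(G)/\gamma_{i+1}(G)$ is an epimorphic image of a tensor power of $G/[G,G]$). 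With that substitution your argument for (b) goes through, and the detour through $p$-primary components is unnecessary.
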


For any variety $\vty{V}$ of residuated lattices, we write $\vty{M}(\vty{V})$ for the class of monoid subreducts of $\vty{V}$, that is, those monoids that are submonoids of the monoid reduct of a residuated lattice from $\vty{V}$. That $\vty{M}(\vty{V})$ is always a quasivariety is readily seen by checking that it is closed under ultraproducts, submonoids and direct products. 

\begin{theorem}\label{t:five}
For any monoid $\m{M}$, the following are equivalent:
\begin{enumerate}
\item $\m{M}$ is a submonoid of a $c$-nilpotent \el group.\smallskip

\item $\m{M}$ is $c$-nilpotent, cancellative, and has unique roots.\smallskip

\item $\m{M}$ has a group of quotients $\m{G(M)}$, that is $c$-nilpotent and torsion-free.\smallskip

\item $\m{M}$ is a submonoid of a totally ordered $c$-nilpotent group.\smallskip

\item $\m{M}$ is a submonoid of a $c$-nilpotent cancellative residuated lattice.
\end{enumerate}
\end{theorem}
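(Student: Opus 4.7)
The plan is to traverse the cycle $(1) \Rightarrow (2) \Rightarrow (3) \Rightarrow (4) \Rightarrow (5) \Rightarrow (1)$.

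For $(1) \Rightarrow (2)$, since $\m{M}$ is a submonoid of a $c$-nilpotent \el group $\m{G}$, the equation $\mathsf{L}_c$ and cancellativity descend directly to $\m{M}$. For unique roots, I invoke the standard fact that the group reduct of any \el group is torsion-free (an element of finite order $n$ would yield a contradiction after joining with $\ut$); Lemma~\ref{l:karg}(c) then gives unique roots in $\m{G}$, and hence in $\m{M}$.

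The crux of the argument is $(2) \Rightarrow (3)$. By Proposition~\ref{prop:nilp}, $\m{M}$ has a $c$-nilpotent group of quotients $\m{G(M)}$; it remains to show that $\m{G(M)}$ is torsion-free. Suppose otherwise: by Lemma~\ref{l:karg}(b) the torsion elements form a normal subgroup, which by Lemma~\ref{l:karg}(a) meets the center in some nontrivial element $g$ of finite order $n$. Writing $g = \iv{a}b$ with $a, b \in M$, centrality of $g$ forces $ab = ba$ (from $ga = ag$), so that
\[
\ut \;=\; g^n \;=\; (\iv{a}b)^n \;=\; \iv{a}^{\,n} b^n,
\]
yielding $a^n = b^n$. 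The unique roots property of $\m{M}$ then gives $a = b$, hence $g = \ut$, a contradiction.

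The remaining implications are short. For $(3) \Rightarrow (4)$ I invoke Mal'cev's classical theorem that every torsion-free nilpotent group admits a compatible total order, turning $\m{G(M)}$ into a totally ordered $c$-nilpotent group containing $\m{M}$. The implication $(4) \Rightarrow (5)$ is immediate, since every totally ordered group is an \el group and hence a cancellative residuated lattice, with $c$-nilpotency preserved by the monoid reduct. Finally, $(5) \Rightarrow (1)$ follows from Theorem~\ref{t:catequiv}: if $\m{M}$ is a submonoid of a $c$-nilpotent cancellative residuated lattice $\m{L}$, then $\m{L}$ embeds as a submonoid into the $c$-nilpotent \el group $\m{G}(\m{L})$, and composing the two inclusions exhibits $\m{M}$ as a submonoid of a $c$-nilpotent \el group.

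The main obstacle I expect is the torsion-free step in $(2) \Rightarrow (3)$: one must combine the structural information from Lemma~\ref{l:karg} with the unique roots hypothesis in exactly the right way, using centrality precisely to commute $\iv{a}$ and $b$ before raising to the $n$th power. The invocation of Mal'cev's orderability theorem in $(3) \Rightarrow (4)$ is essential but off-the-shelf, and all other implications are formal.
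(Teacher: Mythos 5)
Your proposal is correct and follows essentially the same route as the paper: the same cycle $(1)\Rightarrow(2)\Rightarrow(3)\Rightarrow(4)\Rightarrow(5)\Rightarrow(1)$, with the torsion-free step in $(2)\Rightarrow(3)$ handled exactly as in the paper by locating a central torsion element $\iv{a}b$, using centrality to commute $a$ and $b$, and contradicting unique roots via $a^n=b^n$. The only cosmetic difference is that you take the central torsion element directly, whereas the paper first picks an arbitrary torsion element and then passes to a central one; the substance is identical.
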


\begin{proof}
For (1) $\Rightarrow$ (2), assume that $\m{M}$ is a submonoid of a $c$-nilpotent $\ell$\nbd{-}group $\m{G}$. That $\m{M}$ is $c$-nilpotent is immediate by Proposition~\ref{prop:nilplc}. It remains to show that $\m{M}$ has unique roots. To this end, suppose that $a^n = b^n$ for some $n \in \N$, and $a, b \in {M}$. Then, $a^n = b^n$ in $\m{G}$. Now, since $\m{G}$ is an \el group, it is torsion-free, and by Lemma~\ref{l:karg}(c), $a = b$. 

For (2) $\Rightarrow$ (3), observe that $\m{G(M)}$ exists and is $c$-nilpotent by Proposition~\ref{prop:nilp}. Suppose now that $(\iv{a}b)^n=\ut$, for some $a \neq b \in {M}$, and $n \in \N$. Then, $\iv{a}b$ is in the torsion subgroup of $\m{G(M)}$, which is normal by Lemma~\ref{l:karg}(b). By Lemma~\ref{l:karg}(a), its intersection with the center of $\m{G}(\m{M})$ is non\nbd{-}trivial, and hence, there exists a central element $\iv{c}d \in {G(M)}$ such that $c \neq d \in {M}$, and $(\iv{c}d)^m = \ut$ for some $m \in \N$. As $\iv{c}d$ is a central element of $\m{G}(\m{M})$, $c(\iv{c}d) =(\iv{c}d)c$ or, equivalently, $d\iv{c} = \iv{c}d$. Therefore, an easy induction on $m \in \N$ shows that 
\[
(\iv{c}d)^m = (\iv{c})^md^m = \ut.
\]
This implies $c^m = d^m$, which contradicts the assumption that $\m{M}$ has unique roots, since $c$ and $d$ are assumed to be distinct.

For (3) $\Rightarrow$ (4), it suffices to observe that $\m{G(M)}$ admits a total order, as it is torsion\nbd{-}free and nilpotent (see \ \cite[Theorem 2.2.4]{BMR1977}). 

Now, (4) $\Rightarrow$ (5) is trivial, as any totally ordered $c$-nilpotent group is a $c$-nilpotent cancellative residuated lattice. 

Finally, we show (5) $\Rightarrow$ (1). By assumption $\m{M}$ is a submonoid of a $c$-nilpotent cancellative residuated lattice $\m{L}$. Let $\m{G(L)}$ be the $\ell$\nbd{-}group of quotients of $\m{L}$, as defined in Theorem~\ref{t:catequiv}. Since $\m{L}$ is a submonoid of $\m{G(L)}$, the result follows.
\end{proof}

We write $\m{M}_c(X)$ for the free object over $X$ in the quasivariety $\vty{M}(\Nrl)$, and ${\m S}_c(T)$ for the submonoid of the free $c$-nilpotent group $\m{F}_c(X)$ generated by any subset $T \subseteq F_c(X)$. 

\begin{proposition}\label{prop:fisleftquot}
For any $c \in \N$ and any set $X$, the monoid ${\m S}_c(X)$ is isomorphic to the free object $\m{M}_c(X)$ over $X$ in the quasivariety $\vty{M}(\Nrl)$. Further, the free $c$-nilpotent group $\m{F}_c(X)$ is isomorphic to the group of quotients of $\m{M}_c(X)$.
\end{proposition}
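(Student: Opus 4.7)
The plan is to verify first that $\m{S}_c(X)$ belongs to $\vty{M}(\Nrl)$, then to show that it has the universal property characterizing $\m{M}_c(X)$, and finally to identify $\m{F}_c(X)$ with the group of quotients of $\m{M}_c(X)$ via a pair of complementary universal-property arguments.

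For membership in $\vty{M}(\Nrl)$, I would invoke the classical theorem of Malcev (see, e.g., \cite{KarM79}) that every free nilpotent group is torsion\nbd{-}free. Combined with the fact cited in the proof of (3)$\Rightarrow$(4) of Theorem~\ref{t:five}, namely that every torsion\nbd{-}free nilpotent group admits a compatible total order, this makes $\m{F}_c(X)$ a $c$-nilpotent \el group, so its submonoid $\m{S}_c(X)$ lies in $\vty{M}(\Nrl)$ by Theorem~\ref{t:five}. To establish the universal property, consider any $\m{M} \in \vty{M}(\Nrl)$ together with a map $f \colon X \to M$. By Theorem~\ref{t:five}, $\m{M}$ is a submonoid of some $c$-nilpotent \el group $\m{G}$; the composite $X \to M \hookrightarrow G$ then extends uniquely to a group homomorphism $\hat{f} \colon \m{F}_c(X) \to \m{G}$ by the universal property of $\m{F}_c(X)$. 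Since $X$ generates $\m{S}_c(X)$ as a monoid and $\hat{f}(X) = f(X) \subseteq M$, the image $\hat{f}(S_c(X))$ is contained in $M$, so $\hat{f}$ restricts to a monoid homomorphism $\m{S}_c(X) \to \m{M}$ extending $f$. Uniqueness is immediate because $X$ generates $\m{S}_c(X)$.

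For the identification $\m{F}_c(X) \cong \m{G}(\m{M}_c(X))$, I would apply Proposition~\ref{prop:nilp} to obtain the $c$-nilpotent group of quotients $\m{G}(\m{S}_c(X))$, noting that $\m{S}_c(X)$ is cancellative and satisfies $\mathsf{L}_c$. The inclusion $\m{S}_c(X) \hookrightarrow \m{F}_c(X)$ is a monoid homomorphism into a group, hence factors uniquely through $\m{G}(\m{S}_c(X))$ as a group homomorphism $\psi \colon \m{G}(\m{S}_c(X)) \to \m{F}_c(X)$; conversely, the universal property of $\m{F}_c(X)$ produces a group homomorphism $\phi \colon \m{F}_c(X) \to \m{G}(\m{S}_c(X))$ extending $X \hookrightarrow G(\m{S}_c(X))$. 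Both $\psi \circ \phi$ and $\phi \circ \psi$ fix $X$ pointwise, and since $X$ generates both $\m{F}_c(X)$ and $\m{G}(\m{S}_c(X))$ as groups, these compositions are identity maps. The main obstacle I anticipate lies in the first step: the torsion\nbd{-}freeness of $\m{F}_c(X)$ is classical but genuinely non-trivial, and I would cite rather than re-prove it.
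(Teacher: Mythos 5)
Your proof is correct and follows essentially the same route as the paper: both rest on Theorem~\ref{t:five} to place $\m{S}_c(X)$ in $\vty{M}(\Nrl)$ and to embed an arbitrary member of $\vty{M}(\Nrl)$ into a $c$-nilpotent $\ell$-group, and both identify $\m{F}_c(X)$ with the group of quotients via Ore's theory. The only differences are presentational: you verify the universal property of $\m{S}_c(X)$ directly where the paper builds a pair of mutually inverse surjections between $\m{M}_c(X)$ and $\m{S}_c(X)$, and you construct explicit inverse homomorphisms for the second claim where the paper simply cites that any group generated by an Ore monoid is its group of quotients.
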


\begin{proof}
First, observe that ${\m S}_c(X)$ is a member of $\vty{M}(\Nrl)$, by Theorem~\ref{t:five}. Therefore, the unique monoid homomorphism 
\[
\gamma \colon \m{M}_c(X) \to {\m S}_c(X)
\] 
extending the identity map on $X$ exists by the universal property of $\m{M}_c(X)$. The map $\gamma$ is clearly onto, since ${\m S}_c(X)$ is generated by $X$ as a monoid. Further, observe that $\m{M}_c(X)$ sits as a submonoid inside a $c$-nilpotent group $\m{H}$, and that there exists a unique group homomorphism 
\[
\delta \colon \m{F}_c(X) \to \m{H}
\] 
extending the identity map on $X$. This map restricts to a surjective monoid homomorphism 
\[
\hat{\delta} \colon {\m S}_c(X) \to \m{M}_c(X),
\] 
since $\m{M}_c(X)$ is generated by $X$. Thus, $\gamma$ and $\delta$ are inverses to each other. Finally, the second part of the statement follows from the fact that the group of quotients of the monoid ${\m S}_c(X)$ is the free $c$-nilpotent group $\m{F}_c(X)$. This is because any group generated by an Ore monoid $\m{M}$ is a group of quotients of $\m{M}$ (see, e.g.,~\cite[Section 1.10]{clifford1961algebraic}). 
\end{proof}

\noindent Let us remark that Proposition~\ref{prop:fisleftquot} could also be obtained as a consequence of a more general result that can be found in~\cite[\S 5]{MR3183390}.


\section{Prelinearity and its Implications}\label{s:prelinearity}

The remainder of the paper will be concerned with classes of prelinear residuated lattices. A residuated lattice is said to be \emph{prelinear} if it satisfies the following equations:
\[
\begin{array}{rllcrll}
{\sf (LPL)} & (x \ld y \mt \ut) \jn (y \ld x \mt \ut) \eq \ut & \text{ and } & {\sf (RPL)} & (x \rd y \mt \ut) \jn (y \rd x \mt \ut) \eq \ut.
\end{array}
\]
This section is devoted to exploring prelinearity, with particular interest for some of its implications and equivalent formulations. More precisely, Theorem~\ref{t:prelinearity} below shows that residuals in a prelinear residuated lattice preserve finite joins in the numerator, and convert finite meets to joins in the denominator. While prelinearity implies semilinearity in the presence of commutativity~\cite{MR1919685}, this is no longer the case in non-commutative settings. However, Theorem~\ref{t:prelinearity} shows that any prelinear cancellative residuated lattice has a distributive lattice reduct. 

We start with a preliminary lemma.

\begin{lemma}[\cite{bahls2003cancellative}, Proposition 4.1]\label{l:distributive}
The followings are equivalent for any lattice $\m{L}$.

\begin{itemize}
\item [\rm (1)] $\m{L}$ is distributive.\smallskip

\item [\rm (2)] For all $a, b \in L$ with $a \le b$, there exists a join-endomorphism $f \colon L \to L$ such that $f(b) = a$ and $f(x) \le x$, for all $x \in L$.
\end{itemize}
\end{lemma}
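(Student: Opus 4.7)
The plan is to treat the two directions separately, with the forward direction being essentially a one-liner and the reverse direction requiring a short but non-obvious choice of witness.

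For $(1) \Rightarrow (2)$, given $a \le b$ in a distributive lattice $\m{L}$, the natural candidate is the map $f \colon L \to L$ defined by $f(x) := x \mt a$. Then $f(b) = b \mt a = a$ since $a \le b$, and clearly $f(x) \le x$ for all $x \in L$. Distributivity delivers the join-preservation:
\[
f(x \jn y) = (x \jn y) \mt a = (x \mt a) \jn (y \mt a) = f(x) \jn f(y).
\]

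For $(2) \Rightarrow (1)$, the reverse inequality $(x \mt y) \jn (x \mt z) \le x \mt (y \jn z)$ holds in every lattice, so it suffices to establish $x \mt (y \jn z) \le (x \mt y) \jn (x \mt z)$ for arbitrary $x,y,z \in L$. The key move is to apply hypothesis (2) to the pair $a := x \mt (y \jn z)$ and $b := y \jn z$, which satisfies $a \le b$, obtaining a join-endomorphism $f$ with $f(b) = a$ and $f(t) \le t$ for every $t \in L$. Then
\[
f(y) \jn f(z) = f(y \jn z) = f(b) = a = x \mt (y \jn z) \le x,
\]
so both $f(y) \le x$ and $f(z) \le x$. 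Combined with $f(y) \le y$ and $f(z) \le z$, this gives $f(y) \le x \mt y$ and $f(z) \le x \mt z$, whence
\[
x \mt (y \jn z) = f(y) \jn f(z) \le (x \mt y) \jn (x \mt z),
\]
as required.

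No step here looks like a serious obstacle; the only mildly subtle point is spotting the right witness in $(2) \Rightarrow (1)$, namely taking $b$ to be $y \jn z$ itself (rather than, say, $b = y \jn z$ with $a = (x \mt y) \jn (x \mt z)$, which would make the argument spin). Once the correct $a,b$ are chosen, the bound $f(y), f(z) \le x$ extracted from $f(y) \jn f(z) \le x$ is exactly what forces $f(y) \le x \mt y$ and $f(z) \le x \mt z$ and closes the inequality.
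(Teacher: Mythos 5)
Your proof is correct, and it is the standard argument for this equivalence (the paper itself gives no proof, deferring to \cite{bahls2003cancellative}, Proposition 4.1, where essentially this same argument appears): $f(x)=x\mt a$ for the forward direction, and for the converse the witness $a = x\mt(y\jn z)$, $b = y\jn z$ yielding $f(y)\le x\mt y$ and $f(z)\le x\mt z$. Nothing to add.
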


The distributivity of the lattice reduct of any $\ell$-group is an immediate consequence of Lemma~\ref{l:distributive}---it suffices to take $f(x) = x\iv{b}a$.

Consider the following pairs of equations:
\[
\begin{array}{rllcrll}
{\sf (LPL2)} & (y \mt z) \ld x \eq (y \ld x) \jn (z \ld x) & \text{ and } & {\sf (LPL3)} & x \ld (y \jn z) \eq (x \ld y)  \jn (x \ld z); \\[.075in]
{\sf (RPL2)} & x \rd (y \mt z) \eq (x \rd y) \jn (x \rd z) & \text{ and } & {\sf (RPL3)} & (y \jn z) \rd x \eq (y \rd x) \jn (z \rd x).
\end{array}
\]
Parts of the next result can be found in~\cite[Proposition 6.10]{BT03}, and~\cite[Corollary 4.2]{bahls2003cancellative}.

\begin{theorem}\label{t:prelinearity}
The followings hold for any residuated lattice $\m L$.
\begin{itemize}
\item [{\rm (a)}] ${\sf (LPL)}$ implies ${\sf (LPL2)}$ and ${\sf (LPL3)}$. \smallskip

\item [{\rm (b)}] If $\m L$ satisfies $\ut \mt (y \jn z) \eq (\ut \mt y) \jn (\ut \mt z)$, the equations ${\sf (LPL)}$, ${\sf (LPL2)}$ and ${\sf (LPL3)}$ are equivalent. \smallskip

\item [{\rm (c)}] ${\sf (LPL3)}$ and $x \ld x \eq \ut$ imply distributivity of the lattice reduct; in particular, any prelinear cancellative residuated lattice has a distributive lattice reduct.

\end{itemize}
\end{theorem}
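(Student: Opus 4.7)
For part (a), I would first note that the $\ge$ direction in both {\sf (LPL2)} and {\sf (LPL3)} is immediate from Proposition~\ref{prop:facts1}, so only the $\le$ directions require work. The driving observation is that, for arbitrary $y, z$,
\[
y \bigl((y \ld z) \mt \ut\bigr) \le y \mt z \quad \text{and} \quad z \bigl((z \ld y) \mt \ut\bigr) \le y \mt z,
\]
since $y(y \ld z) \le z$ and $y \cdot \ut \le y$ yield both bounds directly. For {\sf (LPL2)} I would multiply $(y \mt z) \ld x$ on the \emph{left} by the identity $\ut = ((y \ld z) \mt \ut) \jn ((z \ld y) \mt \ut)$ supplied by {\sf (LPL)}, distribute the product over the join using Proposition~\ref{prop:facts1}, and bound each summand by the appropriate residual; for instance, $y \cdot \bigl((y \ld z) \mt \ut\bigr) \cdot ((y \mt z) \ld x) \le (y \mt z) \cdot ((y \mt z) \ld x) \le x$ forces $\bigl((y \ld z) \mt \ut\bigr) \cdot ((y \mt z) \ld x) \le y \ld x$. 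For {\sf (LPL3)} I would multiply $x \ld (y \jn z)$ on the \emph{right} by the same expression for $\ut$ and argue analogously, using that $(y \jn z) \bigl((y \ld z) \mt \ut\bigr) \le z$.

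For part (b), one direction is (a). For the converses, I would instantiate {\sf (LPL2)} at $y := x$, $z := y$, $x := x \mt y$ to obtain
\[
(x \mt y) \ld (x \mt y) \eq \bigl(x \ld (x \mt y)\bigr) \jn \bigl(y \ld (x \mt y)\bigr).
\]
The left side dominates $\ut$ (from $(x \mt y) \cdot \ut \le x \mt y$), while Proposition~\ref{prop:facts1} gives $x \ld (x \mt y) \le x \ld y$ and symmetrically $y \ld (x \mt y) \le y \ld x$; hence $\ut \le (x \ld y) \jn (y \ld x)$, and meeting with $\ut$ together with the distributivity hypothesis delivers {\sf (LPL)}. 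The derivation of {\sf (LPL)} from {\sf (LPL3)} is parallel: instantiate $x := x \jn y$, $y := x$, $z := y$ in {\sf (LPL3)} and use that residuals convert joins in the denominator into meets to bound $(x \jn y) \ld x \le y \ld x$.

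For part (c), I plan to invoke Lemma~\ref{l:distributive}: given $a \le b$ in $L$, I would define
\[
f \colon L \to L, \qquad f(x) \coloneqq (b \ld x) \cdot a.
\]
By {\sf (LPL3)} together with the preservation of joins by multiplication in each argument (Proposition~\ref{prop:facts1}), $f$ preserves binary joins; $f(b) = (b \ld b) \cdot a = \ut \cdot a = a$ by the hypothesis $x \ld x \eq \ut$; and $f(x) \le x$ because $a \le b$ forces $b \ld x \le x \rd a$ (any $u$ with $bu \le x$ satisfies $au \le bu \le x$). Lemma~\ref{l:distributive} then supplies distributivity of the lattice reduct. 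The concluding assertion about cancellative prelinear residuated lattices follows at once: prelinearity yields {\sf (LPL3)} by part (a), and Proposition~\ref{prop:xbelowx} provides $x \ld x \eq \ut$.

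The main difficulty will be part (a): the lack of commutativity forces careful bookkeeping regarding the side on which the identity $\ut = ((y \ld z) \mt \ut) \jn ((z \ld y) \mt \ut)$ is multiplied, since the key inequality $y \bigl((y \ld z) \mt \ut\bigr) \le y \mt z$ is inherently one-sided---{\sf (LPL2)} demands multiplication on the left (so $y$ stands in front of the indicator), whereas {\sf (LPL3)} demands multiplication on the right (so the join $y \jn z$ meets the indicator from the correct side). Once the correct configuration is in place, the remaining manipulations are routine consequences of the residuation laws.
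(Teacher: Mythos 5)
Your parts (a) and (b) are correct. In part (a) you take a genuinely different route from the paper: the paper derives ${\sf (LPL2)}$ and ${\sf (LPL3)}$ by showing $\ut \le [(y\ld x)\jn(z\ld x)]\rd[(y\mt z)\ld x]$ (and its analogue) via the mixed associativity identity $x\ld(y\rd z)\eq(x\ld y)\rd z$ of Proposition~\ref{prop:facts}, whereas you write $\ut$ as the join $((y\ld z)\mt\ut)\jn((z\ld y)\mt\ut)$ supplied by ${\sf (LPL)}$, multiply it onto the residual on the appropriate side, and distribute using Proposition~\ref{prop:facts1}. Your bookkeeping about which side to multiply on is exactly right, and the two key inequalities $y((y\ld z)\mt\ut)\le y\mt z$ and $(y\jn z)((y\ld z)\mt\ut)\le z$ do the work the paper's residuation juggling does; arguably your version is the more transparent of the two. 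Part (b) is essentially the paper's argument.

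Part (c), however, contains a genuine error. You set $f(x)=(b\ld x)\cdot a$ and claim $f(x)\le x$ because ``$a\le b$ forces $b\ld x\le x\rd a$.'' Your parenthetical justification---any $u$ with $bu\le x$ satisfies $au\le bu\le x$---proves $b\ld x\le a\ld x$, i.e.\ it controls \emph{left} multiplication by $a$; it says nothing about $x\rd a=\max\{v\mid va\le x\}$, and the inequality $(b\ld x)a\le x$ is false in general. Indeed, every $\ell$-group satisfies both hypotheses of (c), and there $(b\ld x)a\le x$ reads $\iv{b}xa\le x$, i.e.\ $xa\iv{x}\le b$; requiring this for all $x$ whenever $a\le b$ (in particular for $a=b$, and then also with $a$ replaced by $\iv{a}$) forces $xa\iv{x}=a$ for all $x,a$, so the step fails in every non-Abelian $\ell$-group. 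The fix is simply to multiply on the other side, $f(x)=a\cdot(b\ld x)$, as the paper does: then $b\ld x\le a\ld x$ gives $a(b\ld x)\le a(a\ld x)\le x$, while $f(b)=a(b\ld b)=a$ and join-preservation via ${\sf (LPL3)}$ survive unchanged, since left multiplication also preserves joins. With that one-sided correction the rest of your part (c), including the application of Lemma~\ref{l:distributive} and the concluding remark about prelinear cancellative residuated lattices, is fine.
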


\begin{proof}
For (a), consider a residuated lattice $\m L$ satisfying ${\sf (LPL)}$. For any $a, b, c\in {L}$, 
\[
(b \mt c) \ld a \ge (b \ld a) \jn (c \ld a).
\]
To obtain the reverse inequality, and hence conclude ${\sf (LPL2)}$, it suffices to show that
\[
\ut \le [(b \ld a) \jn (c \ld a)] \rd  [(b \mt c) \ld a].
\]
Let $u = (b \ld a) \jn (c \ld a).$  Then, we have 
\begin{align*}
u \rd [(b \mt c)\ld a] &\ge^{\rm (1)} (b\ld a) \rd [(b \mt c) \ld a] \\                                             
  			      &=^{\rm (2)}  b\ld [a\rd  [ (b \mt c) \ld a]] \\
 			      &\ge^{\rm (3)} b \ld(b \mt c) \\
  			      &=^{\rm (4)} (b\ld c)\mt(c\ld c) \\
  			      &\ge^{\rm (5)} (b\ld c)\mt\ut,
\end{align*}
\noindent where (1), (3), (4), and (5) follow by (\ref{eq:residual1}) -- (\ref{eq:residual2}), and by Proposition~\ref{prop:facts1}, while (2) follows by Proposition~\ref{prop:facts}.  Likewise, 
\[
u \rd [(b \mt c)\ld a] \ge (c\ld b)\mt\ut.
\]
\noindent Hence, 
\[
u \rd [(b \mt c)\ld a] \ge [(b\ld c)\mt\ut]\jn [ (c\ld b)\mt\ut] = \ut,
\]
as was to be shown.

For ${\sf (LPL3)}$, observe that it is always the case that
\[
(a \ld b) \jn (a \ld c) \le a \ld(b \jn c).
\]
To establish the reverse inequality, we show that
\[
[a \ld (b \jn c)] \ld [(a \ld b) \jn (a \ld c)] \ge\ut.
\]
Let $u = (a \ld b) \jn (a \ld c)$. We have
\begin{align*}
[a \ld (b \jn c)] \ld u &\ge^{\rm (1)} [a \ld (b \jn c)]\ld (a \ld b) \\
  			     &=^{\rm (2)} [a(a \ld (b \jn c))] \ld b \\
  			     &\ge^{\rm (3)} (b \jn c) \ld b \\
  			     &=^{\rm (4)} (b\ld b)\mt(c\ld b) \\
  			     &\ge^{\rm (5)} (c\ld b)\mt\ut
\end{align*}
where (1), (3), (4), and (5) follow by (\ref{eq:residual1}) -- (\ref{eq:residual2}), and by Proposition~\ref{prop:facts1}, while (2) follows by Proposition~\ref{prop:facts}.
Likewise, 
\[
[a \ld (b \jn c)] \ld u \ge (b \ld c) \mt \ut.
\]
Consequently,
\[ 
[a \ld (b \jn c)] \ld u \ge [(c \ld b)\mt\ut] \jn [(b \ld c)\mt\ut] = \ut,
\]
and thence, the conclusion. 

For (b), assume $\m L$ satisfies ${\sf (LPL2)}$, and let $a, b, c\in L$. Then,
\begin{align*}
 [(a\ld b)\mt\ut]\jn [(b\ld a)\mt\ut] &=^{\rm (1)} [a\ld (a\mt b)\mt\ut]\jn [b\ld (a\mt b)\mt\ut] \\
                           &=^{\rm (2)} [(a\ld (a\mt b))\jn (b\ld (a\mt b))]\mt\ut \\
                           &=^{\rm (3)} [(a\mt b)\ld (a\mt b)]\mt\ut \\
                           &\ge^{\rm (4)}  \ut\mt\ut=\ut,
 \end{align*}
where (1) and (4) follow by (\ref{eq:residual1}) -- (\ref{eq:residual2}), and by Proposition~\ref{prop:facts1}, the equality (2) follows by the assumption, and (3) is a consequence of ${\sf (LPL2)}$.

Finally, assume $\m L$ satisfies ${\sf (LPL3)}$, and let $a, b, c\in {L}$. Then
\begin{align*}
 [(a\ld b)\mt\ut]\jn [(b\ld a)\mt\ut] &=^{\rm (1)}     [(a\jn b)\ld b)\mt\ut]\jn [(a\jn b)\ld a)\mt\ut]\\   
                            &=^{\rm (2)}     [((a\jn b)\ld b)\jn ((a\jn b)\ld a)]\mt\ut\\  
                            &=^{\rm (3)}     [(a\jn b)\ld (a\jn b)]\mt\ut\\ 
                            &\ge^{\rm (4)}  \ut\mt\ut=\ut,
\end{align*}
where (1) and (4) follow by (\ref{eq:residual1}) -- (\ref{eq:residual2}), and by Proposition~\ref{prop:facts1}, the equality (2) follows by the assumption, and (3) is a consequence of ${\sf (LPL3)}$.

For (c), assume that ${\sf (LPL3)}$ and $x \ld x \eq \ut$ hold in $\m{L}$. For any $a \le b \in {L}$, define 
\[
f \colon {L} \to {L}, \quad\quad f(x) = a(b \ld x).
\] 
Then, that $f$ is a join-endomorphism follows from
\begin{align*}
a(b \ld (x \jn y)) & =^{\rm (1)}  a((b \ld x) \jn (b\ld y)) \\
& =^{\rm (2)}  a(b \ld x) \jn a(b \ld y),
\end{align*}
where (1) follows by ${\sf (LPL3)}$, and (2) by Proposition~\ref{prop:facts1}. Further, we have 
\[
f(b) = a(b \ld b) = a
\]
by assumption, and $f(x) \le x$ since 
\[
a \le b \,\,\, \Longrightarrow^{\rm (3)} \, b \ld x \le a \ld x \,\,\, \Longrightarrow^{\rm (4)} \, a(b \ld x) \le x,
\]
where we get (3) by Proposition~\ref{prop:facts1}, and (4) by (\ref{eq:residual1}). We conclude by Lemma~\ref{l:distributive}.
\end{proof}

\noindent Even though Theorem~\ref{t:prelinearity} is presented here only for ${\sf (LPL)}$, ${\sf (LPL2)}$, ${\sf (LPL3)}$, the dual arguments show the analogous results for the equations ${\sf (RPL)}$, ${\sf (RPL2)}$, ${\sf (RPL3)}$. More precisely, the equations ${\sf (RPL)}$, ${\sf (RPL2)}$, ${\sf (RPL3)}$ are equivalent under the hypothesis of Theorem~\ref{t:prelinearity}(b). Further, ${\sf (RPL3)}$ and $x \rd x \eq \ut$ entail distributivity of the lattice reduct.

Following the proof of Theorem~\ref{t:prelinearity}(c), it is easy to see that every prelinear integral residuated lattice has a distributive lattice reduct, as it satisfies ${\sf (LPL3)}$ and $x \ld x \eq \ut$. Finally, in the case of cancellative (resp.\ integral) residuated lattices, prelinearity is equivalent to ${\sf (LPL3)}$ and ${\sf (RPL3)}$. The left-to-right direction is immediate from Theorem~\ref{t:prelinearity}(a). For the converse, observe that ${\sf (LPL3)}$ and cancellativity (resp.\ integrality) together entail distributivity of the lattice reduct. Therefore, by Theorem~\ref{t:prelinearity}(b), ${\sf (LPL)}$ must hold.


\section{Prelinearity and Cancellativity: the Hamiltonian Case}\label{s:Hamiltonian}

This section is devoted to residuated lattices whose convex subuniverses are normal. A residuated lattice $\m{L}$ is said to be {\em Hamiltonian} if every convex subuniverse $H$ of $\m{L}$ is normal, and {\em semilinear} if $\m{L}$ is the subdirect product of totally ordered residuated lattices (we sometimes use the term `(residuated) chain' to denote a totally ordered residuated lattice). A variety $\vty{V}$ of residuated lattices is Hamiltonian if every member of $\vty{V}$ is Hamiltonian and semilinear if each subdirectly irreducible member of $\vty{V}$ is totally ordered\footnote{It is standard to call {\em representable} those \el groups that are semilinear.}.

The result that Hamiltonian \el groups are representable is extended here to prelinear $\ut$-cyclic residuated lattices. More precisely, Theorem~\ref{t:hamiltrepres} shows that ${\sf (LPL)}$ and ${\sf (RPL)}$ provide an axiomatization for semilinearity relative to any variety of Hamiltonian $\ut$\nbd{-}cyclic residuated lattices. Later, this is used to show that a largest variety of Hamiltonian prelinear cancellative residuated lattices exists, thereby extending the analogous result for \el groups. 

\begin{proposition}[\cite{BKLT16}, Theorem 5.6]\label{prop:quasisemil}
For any residuated lattice $\m{L}$, the following are equivalent:
\begin{itemize}
\item [\rm (1)] $\m{L}$ is semilinear.\smallskip

\item [\rm (2)] $\m{L}$ is prelinear, and it satisfies the quasiequation: 
\begin{equation}\label{eq:semilinear}
x \jn y \eq \ut \Longrightarrow \lambda_u(x) \jn \rho_v(y) \eq \ut.
\end{equation}
\end{itemize}
\end{proposition}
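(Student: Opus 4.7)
The implication $(1) \Rightarrow (2)$ is routine. Semilinearity means $\m{L}$ embeds subdirectly into a family of residuated chains, and both prelinearity and the given quasiequation are preserved by subdirect products, so it suffices to verify them in a chain. On a residuated chain, whenever $b \le c$ one has $b \cdot \ut \le c$, hence $\ut \le b \ld c$ and so $(b \ld c) \mt \ut = \ut$; dichotomy then yields ${\sf (LPL)}$ and, dually, ${\sf (RPL)}$. For the quasiequation, $x \jn y = \ut$ in a chain forces $x = \ut$ or $y = \ut$, and from $\ut \le u \ld u$ we obtain $\lambda_u(\ut) = \ut$ and symmetrically $\rho_v(\ut) = \ut$, so the implication holds.

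The substantive direction is $(2) \Rightarrow (1)$. The plan is the standard subdirect-decomposition strategy: for each $a \in L$ with $a \neq \ut$, produce a convex normal subuniverse $P$ of $\m{L}$ with $|a| \notin P$ and $\m{L}/P$ totally ordered. The natural map of $\m{L}$ into the product of all such quotients is then a subdirect embedding into chains. To obtain $P$, apply Zorn's lemma to pick a convex normal subuniverse maximal among those not containing $|a|$; since singletons like $\{\ut\}$ satisfy the property, the family is nonempty, and unions of chains of convex normal subuniverses remain convex and normal.

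It remains to show $\m{L}/P$ is a chain. Fix $b, c \in L$ and set $e_1 = (b \ld c) \mt \ut$ and $e_2 = (c \ld b) \mt \ut$; by ${\sf (LPL)}$, $e_1 \jn e_2 = \ut$. The aim is to conclude $e_1 \in P$ or $e_2 \in P$ (with the dual dichotomy extracted from ${\sf (RPL)}$). If both were to fail, maximality of $P$ would place $|a|$ into both $\csgr{P \cup \{e_1\}}$ and $\csgr{P \cup \{e_2\}}$. Using a normality-aware analogue of Lemma~\ref{l:princconvex}(b), membership in each of these subuniverses translates to an inequality $t_i \le |a|$, where $t_i$ is built from elements of $P$ together with a product of iterated two-sided conjugates of $e_i$. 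The quasiequation, applied iteratively and combined with its $\jn$-symmetric consequence, propagates the identity $e_1 \jn e_2 = \ut$ through each conjugation performed on the $e_i$ side; combining the resulting join-identity with the bounds $t_i \le |a|$ via residuation and monotonicity of $\cdot$ forces $|a|$ itself to lie in $P$, contradicting the choice of $P$.

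The principal obstacle is the combinatorial bookkeeping for iterated conjugation inside the generated convex normal subuniverses: one must both establish that every element of $\csgr{P \cup \{e_i\}}$ (taken in the normal sense) admits the described conjugate-product presentation, and verify that the quasiequation lifts from single conjugate pairs $(\lambda_u(e_1), \rho_v(e_2))$ to the products of iterated conjugates appearing in those presentations, so that the key identity $\tilde e_1 \jn \tilde e_2 = \ut$ persists and actually drives the contradiction.
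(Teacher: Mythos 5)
Your overall strategy is sound: the paper does not prove this proposition but imports it from \cite{BKLT16}, and the argument there follows essentially the route you sketch (Zorn's lemma applied to convex normal subuniverses omitting $\abs{a}$, then showing the maximal such $P$ yields a totally ordered quotient). Your direction $(1)\Rightarrow(2)$ is complete and correct: prelinearity is an identity and \eqref{eq:semilinear} a quasi-identity, both preserved under subalgebras and products, and the verification on chains is right.

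The difficulty is that your treatment of $(2)\Rightarrow(1)$ stops exactly where the content lies. You correctly reduce to showing $e_1\in P$ or $e_2\in P$, but then merely assert that a ``normality-aware analogue of Lemma~\ref{l:princconvex}(b)'' and a ``lifted'' quasiequation would finish the job, and you explicitly flag both as unresolved. These are not bookkeeping details; they are the proof. Two lemmas must actually be established. First, a propagation lemma: for negative $x,y$ with $x\jn y=\ut$, setting $v=\ut$ in \eqref{eq:semilinear} gives $\lambda_u(x)\jn y=\ut$ (since $\rho_{\ut}(y)=(y\rd\ut)\mt\ut=y$), and applying \eqref{eq:semilinear} to $y\jn x=\ut$ with $u=\ut$ gives $\rho_v(x)\jn y=\ut$; iterating shows that every iterated (mixed $\lambda/\rho$) conjugate of $e_1$ joins every iterated conjugate of $e_2$ to $\ut$, and the inequality $(c\jn d)(c'\jn d)\le cc'\jn d$ for negative $c,c',d$ extends this to finite products of such conjugates. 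Second, one needs a usable presentation of the convex normal subuniverse generated by $P\cup\{e_i\}$; the interleaving of $P$-elements with conjugates of $e_i$ that you identify as the obstacle is best avoided by proving the join formula $P\vee N_i=\{x\,:\,pn\le\abs{x}\le(pn)\ld\ut\ \text{for some}\ p\in P^-,\ n\in N_i^-\}$, where $N_i$ is the convex normal subuniverse generated by $e_i$, using $kh\ge\rho_k(h)k$ together with the normality of $P$ to push $P$-elements leftwards. With these in hand the contradiction is immediate: $\abs{a}\ge p_1n_1$ and $\abs{a}\ge p_2n_2$ yield $\abs{a}\ge p_1p_2n_1\jn p_1p_2n_2=p_1p_2(n_1\jn n_2)=p_1p_2\in P$, so $\abs{a}\in P$ by convexity. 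Until these two lemmas are proved rather than named, what you have is a correct plan, not a proof.
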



\noindent The laws ${\sf (LPL)}$ and ${\sf (RPL)}$ hold in all totally ordered residuated lattices and hence in all semilinear residuated lattices. For Hamiltonian $\ut$-cyclic residuated lattices, the converse also holds.

\begin{theorem}\label{t:hamiltrepres}
Any Hamiltonian prelinear $\ut$-cyclic residuated lattice is semilinear.
\end{theorem}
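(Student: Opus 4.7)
My plan is to apply Proposition~\ref{prop:quasisemil}: since prelinearity is already assumed, it suffices to verify the quasiequation
\[
x \jn y \eq \ut \,\,\Longrightarrow\,\, \lambda_u(x) \jn \rho_v(y) \eq \ut
\]
in any Hamiltonian prelinear $\ut$-cyclic residuated lattice $\m{L}$. The strategy is to interpret this quasiequation through the lattice $\mathcal{C}(\m L)$ of convex subuniverses via Lemma~\ref{l:princconvex}(c).

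First, I would make the basic observation that $x \jn y = \ut$ forces $x, y \le \ut$, and that for any $a \le \ut$ the absolute value satisfies $\abs{a} = a$ (since $\ut \rd a \ge \ut \ge a$, so $a \mt (\ut\rd a) \mt \ut = a$). In particular, $\abs{x} = x$, $\abs{y} = y$, and also $\abs{\lambda_u(x)} = \lambda_u(x)$ and $\abs{\rho_v(y)} = \rho_v(y)$ since conjugates are constructed to lie below $\ut$. Combined with the evident equality $\csgr{\ut} = \{\ut\}$, Lemma~\ref{l:princconvex}(c) then yields
\[
\csgr{x} \cap \csgr{y} \;=\; \csgr{\abs{x} \jn \abs{y}} \;=\; \csgr{x \jn y} \;=\; \{\ut\}.
\]

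Next, I would invoke the Hamiltonian hypothesis: the principal convex subuniverse $\csgr{x}$ is normal, so it contains every conjugate of $x$; in particular $\lambda_u(x) \in \csgr{x}$ and hence $\csgr{\lambda_u(x)} \subseteq \csgr{x}$. Likewise $\csgr{\rho_v(y)} \subseteq \csgr{y}$. Applying Lemma~\ref{l:princconvex}(c) a second time gives
\[
\csgr{\lambda_u(x) \jn \rho_v(y)} \;=\; \csgr{\lambda_u(x)} \cap \csgr{\rho_v(y)} \;\subseteq\; \csgr{x} \cap \csgr{y} \;=\; \{\ut\}.
\]
Since $\lambda_u(x) \jn \rho_v(y)$ always belongs to the convex subuniverse it generates, we conclude $\lambda_u(x) \jn \rho_v(y) = \ut$, as required. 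Proposition~\ref{prop:quasisemil} then delivers semilinearity.

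I do not anticipate any real obstacle; the argument is essentially a two-line chase through $\mathcal{C}(\m L)$ once the correct ingredients are identified. The only subtlety worth flagging is the reliance on the identities $\abs{a} = a$ for $a \le \ut$, which is precisely what allows Lemma~\ref{l:princconvex}(c) to convert the hypothesis $x \jn y = \ut$ into the statement $\csgr{x} \cap \csgr{y} = \{\ut\}$ without losing information.
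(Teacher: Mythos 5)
Your argument is correct and is essentially the paper's own proof: both reduce semilinearity to the quasiequation of Proposition~\ref{prop:quasisemil}, translate $x \jn y = \ut$ into $\csgr{x} \cap \csgr{y} = \{\ut\}$ via Lemma~\ref{l:princconvex}(c), and use the Hamiltonian hypothesis to get $\csgr{\lambda_u(x)} \cap \csgr{\rho_v(y)} \subseteq \csgr{x} \cap \csgr{y}$. Your explicit remarks that $x, y \le \ut$ and $\abs{a}=a$ for negative $a$ (needed to invoke Lemma~\ref{l:princconvex}(c)) are left implicit in the paper but are accurate.
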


\begin{proof}
Let $\m{L}$ be a Hamiltonian $\ut$-cyclic residuated lattice satisfying the prelinearity laws, and suppose $a \lor b = \ut$, for $a, b \in L$. Then, 
\[
\ut = \csgr{a \lor b} = \csgr{a} \cap \csgr{b}
\]
by Lemma~\ref{l:princconvex}. Since $\m{L}$ is Hamiltonian, for any $c,d \in L$, we have $\lambda_c(a) \in \csgr{a}$, and $\rho_d(b) \in \csgr{b}$. Therefore, again by Lemma~\ref{l:princconvex},
\begin{align*} \smallskip
\csgr{\lambda_c(a) \lor \rho_d(b)} & =  \csgr{\lambda_c(a)} \cap \csgr{\rho_d(b)} \\ \smallskip
& \subseteq  \csgr{a} \cap \csgr{b} \\ 
& =  \ut,
\end{align*}
and hence, $\lambda_c(a) \lor \rho_d(b) = \ut$.
\end{proof}

\noindent Theorem~\ref{t:hamiltrepres} implies the result in~\cite{MR1919685} that a prelinear commutative residuated lattice is semilinear.

\begin{corollary}\label{c:hamsemil}
Any commutative prelinear residuated lattice is semilinear.
\end{corollary}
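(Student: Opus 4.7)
The plan is to obtain the corollary as an immediate application of Theorem~\ref{t:hamiltrepres}. Since a commutative residuated lattice is already assumed to be prelinear, it suffices to verify the two remaining hypotheses of that theorem, namely that every commutative residuated lattice $\m{L}$ is $\ut$-cyclic and Hamiltonian.

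The $\ut$-cyclicity is immediate: by commutativity of $\cdot$, the sets in (\ref{eq:residual2}) with $b = \ut$ coincide, so their greatest elements coincide, giving $x\ld \ut = \ut \rd x$. For the Hamiltonian property, I will take an arbitrary convex subuniverse $H$ of $\m{L}$ together with $a \in H$ and $b \in L$, and show $\lambda_b(a), \rho_b(a) \in H$. The key observation is that commutativity of $\cdot$ forces $b\cdot a = a\cdot b = ab$, and hence, by (\ref{eq:residual1}), $a \le b \ld ab$. Meeting with $\ut$ yields
\[
a \mt \ut \;\le\; (b \ld ab) \mt \ut \;=\; \lambda_b(a) \;\le\; \ut.
\]
Since $\ut \in H$ (as $H$ is a subuniverse) and $a \in H$, also $a \mt \ut \in H$; order-convexity of $H$ then gives $\lambda_b(a) \in H$. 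The argument for $\rho_b(a)$ is identical (in fact, in the commutative setting $\lambda_b(a) = \rho_b(a)$, since $\ld$ and $\rd$ coincide). Therefore $H$ is normal, $\m{L}$ is Hamiltonian, and Theorem~\ref{t:hamiltrepres} delivers semilinearity.

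There is no serious obstacle here: the entire content is encoded in the inequality $a \le b \ld ab$, which follows directly from commutativity and residuation. The corollary is essentially a sanity check that commutativity is a strong enough structural hypothesis to trigger the machinery of Theorem~\ref{t:hamiltrepres}.
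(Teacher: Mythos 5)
Your proposal is correct and follows exactly the route the paper intends: Corollary~\ref{c:hamsemil} is stated as an immediate consequence of Theorem~\ref{t:hamiltrepres}, and the only content is the routine verification (which you supply) that commutativity forces both $\ut$-cyclicity and the Hamiltonian property via $a \mt \ut \le \lambda_b(a) = \rho_b(a) \le \ut$ together with convexity of $H$. Nothing is missing.
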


Moreover, by Theorem~\ref{t:hamiltrepres} we also conclude the following.

\begin{corollary}\label{c:hamsemil}
Any Hamiltonian prelinear cancellative residuated lattice is semilinear.
\end{corollary}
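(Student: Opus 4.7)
The plan is to derive this as an immediate consequence of Theorem~\ref{t:hamiltrepres} combined with Proposition~\ref{prop:cancecycl}. The argument is essentially a one-line reduction: every cancellative residuated lattice is $\ut$-cyclic, so the hypotheses of Theorem~\ref{t:hamiltrepres} are automatically satisfied for any Hamiltonian prelinear cancellative residuated lattice.

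More precisely, let $\m{L}$ be a Hamiltonian prelinear cancellative residuated lattice. First I would invoke Proposition~\ref{prop:cancecycl} to conclude that $\m{L}$ is $\ut$-cyclic, i.e., it satisfies $x \ld \ut \eq \ut \rd x$. Together with the assumptions that $\m{L}$ is Hamiltonian and satisfies the prelinearity equations ${\sf (LPL)}$ and ${\sf (RPL)}$, this places $\m{L}$ squarely within the hypotheses of Theorem~\ref{t:hamiltrepres}. Applying that theorem then yields that $\m{L}$ is semilinear, as required.

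There is no genuine obstacle here, since all the heavy lifting has been done in the preceding results. If anything, the only thing to be mindful of is simply citing the correct earlier results in the correct order; the cancellativity hypothesis is used solely via Proposition~\ref{prop:cancecycl} to provide $\ut$-cyclicity, while the prelinearity and Hamiltonian hypotheses feed directly into Theorem~\ref{t:hamiltrepres}.
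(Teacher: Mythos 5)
Your proposal is correct and matches the paper's proof exactly: the paper also derives the corollary by combining Proposition~\ref{prop:cancecycl} (cancellative implies $\ut$-cyclic) with Theorem~\ref{t:hamiltrepres}.
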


\begin{proof}
The conclusion follows by Proposition~\ref{prop:cancecycl} and Theorem~\ref{t:hamiltrepres}.
\end{proof}

The class of Hamiltonian \el groups is not itself an equational class. The variety of {\em weakly Abelian} \el groups, introduced in~\cite{Martinez1972}, is the largest variety of Hamiltonian $\ell$\nbd{-}groups~\cite[Corollary 2.3]{Reilly1983}. It is defined relative to $\LG$ by the equation
\begin{equation}\label{eq:hamiltlg}
(x \mt \ut)^2 \le \iv{y}(x \mt \ut)y.
\end{equation}
We extend this result to the context of prelinear cancellative residuated lattices. Note that the analogous result fails for $\ut$-cyclic residuated lattices (see\ \cite[Theorem 6.3]{BKLT16}), as there is no largest variety of Hamiltonian $\ut$-cyclic residuated lattices. 

\begin{theorem}\label{t:hamvty}
There exists a largest variety of Hamiltonian prelinear cancellative residuated lattices. More precisely, a variety $\vty{V}$ of prelinear cancellative residuated lattices is Hamiltonian if and only if $\vty{V}$ satisfies the equation
\begin{equation}\label{eq:hamilt}
(x \mt \ut)^2  \le \lambda_y(x) \wedge \rho_z(x),
\end{equation}
where $\lambda_y$ and $\rho_z$ are defined as in (\ref{eq:conjugates}).
\end{theorem}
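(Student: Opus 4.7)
The plan is to establish the biconditional in two directions; the existence of a largest variety then follows, since the variety defined by (\ref{eq:hamilt}) is Hamiltonian by the easy direction and contains every Hamiltonian variety by the hard one.

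For the direction ``$\vty{V}$ satisfies (\ref{eq:hamilt}) implies $\vty{V}$ is Hamiltonian,'' I would proceed as follows. Fix $\m{L} \in \vty{V}$, a convex subuniverse $H$ of $\m{L}$, an element $a \in H$, and $b \in L$. It suffices to show $\lambda_b(a), \rho_b(a) \in H$. Since both conjugates lie below $\ut$ by definition and $\m{L}$ is $\ut$\nbd{-}cyclic by Proposition~\ref{prop:cancecycl}, their absolute values coincide with themselves. Combining $\abs{a} \le a \mt \ut$ with (\ref{eq:hamilt}) yields $\abs{a}^2 \le (a \mt \ut)^2 \le \lambda_b(a) \mt \rho_b(a)$, so $\abs{a}^2 \le \abs{\lambda_b(a)}$ and $\abs{a}^2 \le \abs{\rho_b(a)}$. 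Lemma~\ref{l:princconvex}(b) then places both elements in $\csgr{a} \subseteq H$.

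For the converse direction, suppose $\vty{V}$ is a variety of Hamiltonian prelinear cancellative residuated lattices. By Corollary~\ref{c:hamsemil}, $\vty{V}$ is semilinear, so it suffices to verify (\ref{eq:hamilt}) in every totally ordered $\m{L} \in \vty{V}$. In such an $\m{L}$, a case split reduces (\ref{eq:hamilt}) to a triviality when $a \ge \ut$, since cancellativity forces $\lambda_b(a) = \rho_b(a) = \ut$; when $a \le \ut$, it reduces to the two semigroup inequalities $ba^2 \le ab$ and $a^2 b \le ba$ for every $b \in L$. Since $\m{L}$ is a totally ordered cancellative monoid, it is Ore, and so by the equivalence $\ORL \leftrightarrow \LGcn$ of~\cite{MT10} it embeds into a totally ordered $\ell$\nbd{-}group $\m{G}(\m{L})$ as the image of a conucleus. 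Inside $\m{G}(\m{L})$ the sought inequalities are precisely instances of the weakly Abelian law $(a \mt \ut)^2 \le \iv{b}(a \mt \ut)b$. Applying Reilly's theorem (\cite[Corollary 2.3]{Reilly1983})---that the largest variety of Hamiltonian $\ell$\nbd{-}groups is the weakly Abelian one---to the $\ell$\nbd{-}group variety $\vty{V}^{*}$ generated by the groups of quotients of totally ordered members of $\vty{V}$ then yields the desired inequalities in $\m{G}(\m{L})$, which descend to $\m{L}$ since both sides of each inequality live in the image of the conucleus.

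The main obstacle is verifying that $\vty{V}^{*}$ is itself a Hamiltonian variety of $\ell$\nbd{-}groups---the hypothesis needed to invoke Reilly's theorem. This step requires the correspondence between convex subuniverses of $\m{L}$ and convex $\ell$\nbd{-}subgroups of $\m{G}(\m{L})$ mediated by the conucleus $\sigma_{\m{L}}$ of Theorem~\ref{t:catequiv} and its general analogue in~\cite{MT10}, together with the observation that normality of a convex subuniverse of $\m{L}$ is reflected in normality of the corresponding convex $\ell$\nbd{-}subgroup of $\m{G}(\m{L})$. Once this bridge is in place, Reilly's theorem closes the proof, and the existence of a largest Hamiltonian variety of prelinear cancellative residuated lattices follows immediately from the biconditional.
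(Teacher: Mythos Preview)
Your easy direction (equation $\Rightarrow$ Hamiltonian) is correct and matches the paper's argument, which simply uses that $(a\mt\ut)^2\in H$ together with convexity; your phrasing via Lemma~\ref{l:princconvex}(b) is an acceptable variant.

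The converse direction, however, has two genuine gaps.

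\emph{First}, the claim ``since $\m{L}$ is a totally ordered cancellative monoid, it is Ore'' is false. Take the free monoid on two generators with the reversed shortlex order: by Lemma~\ref{l:higman} this is a cancellative integral residuated chain, and a length comparison shows it even satisfies~(\ref{eq:hamilt}), so it lies in a Hamiltonian variety; yet $Ma\cap Mb=\emptyset$, so it is not right-reversible and has no group of quotients. Thus your passage to $\m{G}(\m{L})$ is unavailable for some of the chains you must handle.

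\emph{Second}, even restricting to the Ore case, your ``main obstacle'' is more serious than you suggest. The route you sketch would at best show that each individual $\m{G}(\m{L})$ is Hamiltonian. But Hamiltonian $\ell$\nbd{-}groups do not form a variety (\cite[Proposition~1.4]{Conrad1980}), so the variety $\vty{V}^{*}$ they generate need not be Hamiltonian, and Reilly's theorem cannot be invoked for $\vty{V}^{*}$. What you would actually need is that each $\m{G}(\m{L})$ is \emph{weakly Abelian}, and for that you cannot appeal to Reilly---you would have to prove $ba^2\le ab$ in $\m{G}(\m{L})$ for arbitrary quotients $a=\iv{m}n$, $b=\iv{p}q$, which does not follow from the convex\nbd{-}subuniverse correspondence alone.

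The paper avoids both issues by arguing the contrapositive entirely inside residuated lattices: from a chain $\m{T}\in\vty{V}$ with $ab<ba^2$ it proves inductively that $b^n\ld ab^n<a^n$ for all $n$, and then exhibits the product $\prod_{i}\m{T}$ with $\bar a(i)=a$, $\bar b(i)=b^i$ as a non-Hamiltonian member of $\vty{V}$, since $\lambda_{\bar b}(\bar a)\notin\csgr{\bar a}$. This is essentially Reilly's own construction transported to the residuated setting, and it requires neither the Ore condition nor any transfer of the Hamiltonian property across the categorical equivalence.
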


\begin{proof}
Suppose that $\vty{V}$ is a variety of prelinear cancellative residuated lattices that satisfies equation~\eqref{eq:hamilt}. 
Let $\m{L} \in \vty{V}$, $H \in \mathcal{C}(\m{L})$, $a \in {H}$, and $b \in {L}$. We have $(a \mt \ut)^2 \in {H}$ and
\[
(a \mt \ut)^2 \le \lambda_b(a) \wedge \rho_b(a) \le (b \ld ab) \mt \ut \le \ut \,\text{, } \quad (a \mt \ut)^2 \le \lambda_b(a) \wedge \rho_b(a) \le (ba \rd b) \mt \ut \le \ut.
\]
Hence the convexity of $H$ implies that $\lambda_b(a), \rho_b(a) \in {H}$. We have shown that $\m{H}$ is normal and hence $\vty{V}$ is a  Hamiltonian variety.

To prove the converse direction, we use logical contrapositive. Suppose that $\vty{V}$ is a variety of prelinear cancellative residuated lattices that fails the equation~(\ref{eq:hamilt}), that is, in $\vty{V}$ either $(x \mt \ut)^2  \not\le \lambda_y(x)$ or $(x \mt \ut)^2  \not\le \rho_z(x)$. We may assume without loss of generality that $(x \mt \ut)^2  \not\le \lambda_y(x)$. Then, by Corollary~\ref{c:hamsemil}, there exists a residuated chain $\m{T} \in \vty{V}$ and an element $a \in {T}^-$ such that $a^2 \not \le b \ld ab \mt \ut$ for some $b \in T$ or, by cancellativity,
\begin{equation}\label{eq:assumption}
ab < ba^2\text{ for some }b \in {T}.
\end{equation} 
Condition~\eqref{eq:assumption} can be used to construct a non-Hamiltonian member $\m{L} \in \vty{V}$. Note first the following:

\paragraph{Claim.} For any $n \in \N$, $a^n b < ba^{2n}$.
\\ \\
\noindent Indeed, we proceed by induction on $n \in \N$. The base case follows from (\ref{eq:assumption}). For the induction step, observe that
\begin{align*}
 a^{n+1}b & =\,\,\,\,\,\,  aa^nb \\                                                
                & <^{\rm (1)}  aba^{2n} \\                                                          
                & <^{\rm (2)}  ba^2a^{2n} \\
                & =\,\,\,\,\,\,  ba^{2(n+1)},
\end{align*}
where (1) follows by the induction hypothesis, and (2) from (\ref{eq:assumption}).

\paragraph{Claim.} For any $n \in \N$, $ab^n < b^na^{2^n}$.
\\ \\
\noindent Again, we proceed by induction on $n \in \N$. The base case follows from (\ref{eq:assumption}). For the induction step, observe that
\begin{align*}
 ab^{n+1} & =\,\,\,\,\,\,  ab^nb \\                                                
                & <^{\rm (1)}  b^na^{2^n}b \\                                                          
                & <^{\rm (2)}  b^nba^{2\cdot2^n} \\
                & =\,\,\,\,\,\,  b^{n+1}a^{2^{(n+1)}},
\end{align*}
where (1) follows by the induction hypothesis, and (2) from the previous claim.
\\ \\
\noindent In view of the cancellativity of $\m{T}$, the inequality $ab^n < b^n a^{2n}$ may be written as $b^n \ld ab^n < a^{2n}$, which implies that
\begin{equation}\label{eq:counterex}
b^n \ld ab^n < a^{n},\,\,\, \text{ for all }\,n \in \N.
\end{equation}
To conclude the argument, consider now
\[
\m{L} = \prod_{i \,\in \,\Z^+} \m{T}_i,
\] 
where $\m{T}_i$ is a copy of $\m{T}$ for every $i \in \Z^+$. Let $\bar{a}, \bar{b} \in {L}$ be the elements $\bar{a}(i) = a$ and $\bar{b}(i) = b^i$, for all $i \in \Z^+$. It is now clear, in view of~\eqref{eq:counterex}, that $(\bar{a})^n \not\le \lambda_{\bar{b}}(\bar{a})$ for all $n \in \N$, as $\lambda_{\bar{b}}(\bar{a})(i) = b^i \ld ab^i$ for all $i \in \Z^+$. Then, $\lambda_{\bar{b}}(\bar{a}) \not \in \csgr{\bar{a}}$, and $\lambda_{\bar{b}}(\bar{a})$ witnesses the failure of the Hamiltonian property for $\m{L}$. 
\end{proof}


\section{Prelinearity and Cancellativity: the Nilpotent Case}\label{s:NilpotentPrel}

The preceding section demonstrates that Hamiltonian prelinear cancellative residuated lattices bear striking similarities with Hamiltonian \el groups. We now move into the study of nilpotent prelinear cancellative residuated lattices. It is known that nilpotent $\ell$\nbd{-}groups are representable (\cite{Kop75}; see\ \cite[Theorem 4]{Holl78}), and even Hamiltonian (\cite[Theorem 2.4]{Reilly1983}; see\ \cite[Corollary 2]{Kop75}). The main result of this section is Theorem~\ref{t:nilprlham}, where nilpotent cancellative residuated lattices are proved to be Hamiltonian. As a consequence, we obtain that nilpotent prelinear cancellative residuated lattices are semilinear.

\begin{theorem}\label{t:nilprlham}
Every nilpotent cancellative residuated lattice is Hamiltonian.
\end{theorem}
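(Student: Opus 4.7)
My plan is to lift the problem to the \el group of quotients via Theorem~\ref{t:catequiv}, and then invoke the fact---recorded at the start of this section---that nilpotent \el groups are Hamiltonian. Write $\m{G}=\m{G}(\m{L})$ for the $c$\nbd{-}nilpotent \el group of quotients of $\m{L}$ together with its conucleus $\sigma_{\m{L}}$, so that $\m{L}=\m{G}_{\sigma_{\m{L}}}$ and $\sigma_{\m{L}}(\iv{a}b)=a\ld b$ for $a,b\in L$. Fix a convex subuniverse $H\in\mathcal{C}(\m{L})$, an element $a\in H$, and $b\in L$; by symmetry it is enough to show $\lambda_b(a)\in H$.

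I begin with a reduction to $H\cap L^-$. Since $H$ is a subuniverse, $|a|\in H$; and since the map $x\mapsto (b\ld xb)\mt\ut$ is monotone and $|a|\le a$, one has $\lambda_b(|a|)\le\lambda_b(a)\le\ut$. Granted $\lambda_b(|a|)\in H$, convexity of $H$ together with $\ut\in H$ then forces $\lambda_b(a)\in H$. Hence I may assume $a\in H\cap L^-$, so that Lemma~\ref{l:princconvex}(b) supplies $\csgr{a}\subseteq H$, and the task reduces to showing $\lambda_b(a)\in\csgr{a}$.

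The heart of the argument identifies $\csgr{a}$ with the trace on $L$ of the principal convex \el subgroup $C(a)=\{g\in G\mid a^n\le g\le a^{-n}\ \text{for some}\ n\in\N\}$ of $\m{G}$. The key identity is $a^n\ld^L\ut=\sigma_{\m{L}}(a^{-n})$, expressing that $a^n\ld^L\ut$ is the largest element of $L$ sitting below $a^{-n}$ in $\m{G}$; together with Lemma~\ref{l:princconvex}(b) this gives $\csgr{a}=C(a)\cap L$. Since $\m{G}$ is Hamiltonian, $C(a)$ is normal, so $\iv{b}ab\in C(a)$ and there is $n\in\N$ with $a^n\le\iv{b}ab\le a^{-n}$. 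Applying $\sigma_{\m{L}}$ and using that $a^n\in L$ is a fixed point of $\sigma_{\m{L}}$ while $\sigma_{\m{L}}$ is order-preserving and contracting, one obtains $a^n\le\sigma_{\m{L}}(\iv{b}ab)\le a^{-n}$, so that $\sigma_{\m{L}}(\iv{b}ab)\in C(a)\cap L=\csgr{a}$. Finally, $a\le\ut$ forces $\iv{b}ab\le\ut$ and thus $\sigma_{\m{L}}(\iv{b}ab)\le\ut$, so by Theorem~\ref{t:catequiv} the conucleus formula gives $\lambda_b(a)=(b\ld ab)\mt\ut=\sigma_{\m{L}}(\iv{b}ab)\in\csgr{a}$, as required.

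The main obstacle I anticipate is the careful dictionary between the residuals and meets of $\m{L}$ and the group operations of $\m{G}$---specifically making rigorous the correspondence $\csgr{a}=C(a)\cap L$ and the claim that $\sigma_{\m{L}}$ sends $\iv{b}ab$ back into $C(a)$. These are bookkeeping issues entirely controlled by the formulas of Theorem~\ref{t:catequiv}; once they are in place, the Hamiltonian property of nilpotent \el groups delivers the conclusion.
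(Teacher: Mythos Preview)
Your argument is correct. Both your proof and the paper's lift to the nilpotent \el group $\m{G}=\m{G}(\m{L})$ of Theorem~\ref{t:catequiv} and invoke the Hamiltonian property there, but the execution differs. The paper avoids the bookkeeping about convex subuniverses entirely: it uses that nilpotent \el groups are \emph{weakly Abelian}, so for $c\le\ut$ in $\m{G}$ one has the concrete inequalities $dc^2\preceq cd$ and $c^2d\preceq dc$. Since these involve only elements of the submonoid $\m{L}$ and the order $\preceq$ restricts to $\le$, they hold in $\m{L}$; cancellativity~\eqref{eq:cancellative} then gives $(a\mt\ut)^2\le\lambda_b(a)\wedge\rho_b(a)$, which is exactly equation~\eqref{eq:hamilt}, and the forward direction of Theorem~\ref{t:hamvty} finishes. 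Your route instead identifies $\csgr{a}$ with the trace $C(a)\cap L$ of the principal convex \el subgroup of $\m{G}$, and pushes $\iv{b}ab\in C(a)$ back into $L$ via the conucleus. This is more structural and makes explicit how convex subuniverses of $\m{L}$ sit inside those of $\m{G}$; it would also work whenever the enveloping \el group is merely Hamiltonian rather than weakly Abelian. The paper's approach is shorter and, as a bonus, establishes the uniform bound~\eqref{eq:hamilt} rather than an element-dependent power.
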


\begin{proof}
Let $\m{L}$ be a nilpotent cancellative residuated lattice. By Theorem~\ref{t:hamvty}, it suffices to show that $\m{L}$ satisfies the equation (\ref{eq:hamilt}). For this, pick $c, d \in {L}$, with $c \le \ut$. Then, both $dc^2 \preceq cd$ and $c^2d \preceq dc$ hold in $\m{G}(\m{L})$, since the latter is a nilpotent, and hence Hamiltonian, \el group. Since $\m{L}$ is a submonoid of $\m{G}(\m{L})$, and the restriction of the order $\preceq$ to $\m{L}$ is the order $\le$ of $\m{L}$, then $dc^2 \le cd$ and $c^2d \le dc$ hold in $\m{L}$. Therefore, using the equations (\ref{eq:cancellative}) we can conclude that $\m{L}$ satisfies $c^2 \le d \ld cd$ and $c^2 \le dc \rd d$, for $c,d \in {L}$ with $c \le \ut$. Thus, for all $a, b_1, b_2 \in {L}$, 
\[
(a \mt \ut)^2 \le b_1 \ld (a \mt \ut)b_1 \le (b_1 \ld ab_1) \mt \ut\,\,\,\text{ and }\,\,\,(a \mt \ut)^2 \le b_2(a \mt \ut) \rd b_2 \le (b_2a \rd b_2) \mt \ut,
\]
that is, $(a \mt \ut)^2  \le \lambda_{b_1}(a) \wedge \rho_{b_2}(a)$, as was to be shown.
\end{proof}

\begin{remark}
For the variety $\Ntworl$, we can also provide a direct argument, without going through Theorem~\ref{t:catequiv}. Pick any $\m{L} \in \Ntworl$. Then, for $a, b \in {L}$, 
\begin{align*}\smallskip
b(a \mt \ut) \ut (a \mt \ut)b & =^{\rm (1)} (a \mt \ut)b \ut b(a \mt \ut) \\ \smallskip
& \le^{\rm (2)}  b^2 (a \mt \ut) \\ \smallskip
& \le^{\rm (3)}  b^2a,
\end{align*}
where (1) follows by the equation ${\sf L}_2$, and (2) and (3) follow from $(a \mt \ut) \le a, \ut$.
Therefore, 
\begin{align*}\smallskip
(a \mt \ut)^2b & =^{\rm (4)} b \ld b(a \mt \ut)^2b \\
& \le^{\rm (5)} b \ld b^2 a \\
& =^{\rm (6)} ba,
\end{align*}
where (4) and (6) follow by (\ref{eq:cancellative}), and (5) follows by what we showed above, together with Proposition~\ref{prop:facts1}. That $b(a \mt \ut)^2 \le ab$ can be proved symmetrically.
\end{remark}

We can conclude from Theorem~\ref{t:hamiltrepres} and Theorem~\ref{t:nilprlham} that nilpotent prelinear cancellative residuated lattices are semilinear. For the convenience of the reader, we also present an alternative argument, making use of Theorem~\ref{t:catequiv}.

\begin{theorem}\label{t:nilsemil}
Every nilpotent prelinear cancellative residuated lattice is semilinear.
\end{theorem}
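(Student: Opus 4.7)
The plan is to obtain the statement as an immediate consequence of the two headline results of the preceding sections. Let $\m{L}$ be a nilpotent prelinear cancellative residuated lattice. First, invoke Proposition~\ref{prop:cancecycl} to observe that cancellativity automatically delivers $\ut$-cyclicity of $\m{L}$. Second, apply Theorem~\ref{t:nilprlham} to conclude that $\m{L}$ is Hamiltonian. At this point $\m{L}$ satisfies all three hypotheses of Theorem~\ref{t:hamiltrepres}—Hamiltonian, prelinear, and $\ut$-cyclic—so that theorem yields semilinearity of $\m{L}$, which is exactly what is required. This is essentially a one-line proof once the earlier machinery is in place.

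As an alternative route advertised in the introduction to this section, I would argue through the categorical correspondence of Theorem~\ref{t:catequiv}. By that theorem, $\m{L}$ embeds as a submonoid and as a join-subsemilattice into its group of quotients $\m{G}(\m{L})$, with the order of $\m{L}$ being the restriction of the order $\preceq$ on $\m{G}(\m{L})$, and $\m{G}(\m{L})$ is a nilpotent \el group. By the classical result of Kopytov (cited in the opening of Section~\ref{s:NilpotentPrel}), $\m{G}(\m{L})$ is representable, so by Proposition~\ref{prop:quasisemil} it satisfies the quasiequation
\[
x \jn y \eq \ut \,\Longrightarrow\, \lambda_u(x) \jn \rho_v(y) \eq \ut.
\]
I would then transfer this quasiequation to $\m{L}$: if $a, b \in L$ with $a \jn b = \ut$ in $\m{L}$, then the same identity holds in $\m{G}(\m{L})$ (joins are preserved), and for $u, v \in L$ the conjugates $\lambda_u(a), \rho_v(b)$ computed in $\m{L}$ agree with their counterparts in $\m{G}(\m{L})$ after truncation at $\ut$ (so that they land in $L$ and their join, again being preserved, equals $\ut$ in $\m{L}$). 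Together with the given prelinearity of $\m{L}$, Proposition~\ref{prop:quasisemil} then yields semilinearity.

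The genuine mathematical work for both routes has already been discharged earlier: for the direct approach it is the Hamiltonian property of nilpotent cancellative residuated lattices (Theorem~\ref{t:nilprlham}) and the semilinearity characterisation for Hamiltonian prelinear $\ut$-cyclic algebras (Theorem~\ref{t:hamiltrepres}); for the alternative route it is the careful setup of the categorical correspondence in Theorem~\ref{t:catequiv} together with Kopytov's representability theorem for nilpotent \el groups. The only delicate point in the alternative route is the bookkeeping to check that the conjugates $\lambda_u(\cdot)$ and $\rho_v(\cdot)$ in $\m{L}$ really do control the corresponding elements in $\m{G}(\m{L})$; this is where the join-subsemilattice clause of Theorem~\ref{t:catequiv} is essential. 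The direct proof avoids this step entirely, which is why I would lead with it.
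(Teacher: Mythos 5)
Your lead argument is correct and is exactly the derivation the paper itself records: cancellativity gives $\ut$-cyclicity (Proposition~\ref{prop:cancecycl}), Theorem~\ref{t:nilprlham} gives the Hamiltonian property, and Theorem~\ref{t:hamiltrepres} then yields semilinearity. The paper states this one-line deduction in the remark preceding the theorem and chooses to display the second, categorical argument ``for the convenience of the reader,'' so your primary route is a legitimate (indeed, the paper's own) proof.

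Your sketch of the alternative route, however, contains a gap at precisely the point you flag as ``bookkeeping.'' The conjugates computed in $\m{L}$ do \emph{not} in general agree with the conjugates computed in $\m{G}(\m{L})$ truncated at $\ut$. By Theorem~\ref{t:catequiv}, $u \ld_{\m{L}} au = \sigma_{\m{L}}(\iv{u}au)$ and the meet of $\m{L}$ is $\sigma_{\m{L}}$ applied to the meet of $\m{G}(\m{L})$, so one only gets $\lambda^{\m{L}}_{u}(a) \preceq \iv{u}au \mt \ut = \lambda^{\m{G}(\m{L})}_{u}(a)$, possibly strictly: $\m{L}$ is a join-subsemilattice of $\m{G}(\m{L})$, but neither meets nor residuals are preserved. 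Since the conclusion of the quasiequation in Proposition~\ref{prop:quasisemil} requires the conjugates to be \emph{large} enough that their join is $\ut$, an inequality in this direction cannot transfer the conclusion from $\m{G}(\m{L})$ to $\m{L}$. The paper's proof circumvents this by never comparing conjugates across the two structures: from $\iv{c}ac \jn b = \ut$ in $\m{G}(\m{L})$ it multiplies by $c$ to get $ac \jn cb = c$, an identity among elements of $L$ that survives restriction because joins are preserved; it then applies $c \ld_{\m{L}} (-)$ and uses ${\sf (LPL3)}$ (Theorem~\ref{t:prelinearity}(a)) together with $c \ld c = \ut$ to distribute the residual over the join inside $\m{L}$. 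Thus prelinearity of $\m{L}$ is used in the transfer itself, not merely to meet the hypothesis of Proposition~\ref{prop:quasisemil}. If you intend to keep the second route, replace the claimed identification of conjugates with this prelinearity computation; otherwise, your first route stands on its own.
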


\begin{proof}
Let $\m{L}$ be a nilpotent prelinear cancellative residuated lattice and let $\m{G}(\m{L})$ be its $\ell$\nbd{-}group of quotients. We show that $\m{L}$ satisfies (\ref{eq:semilinear}) of Proposition~\ref{prop:quasisemil}. Let $a, b, c \in {L}$, and assume $a \jn b = \ut$. By Theorem~\ref{t:catequiv}, $a \jn b = \ut$ holds in the nilpotent \el group $\m{G}(\m{L})$. This implies that $\iv{c}ac \jn b = \ut$ by Proposition~\ref{prop:quasisemil}, as nilpotent \el groups are representable. Hence, also $ac \jn cb = c$ in $\m{L}$, and therefore, 
\[
c \ld (ac \jn cb) = c\ld c = \ut.
\] 
Now, by Theorem~\ref{t:prelinearity}(a), we get $c \ld ac \jn c \ld cb = \ut$, that is, $\lambda_c(a) \jn b= \ut$. Similarly, and by the comment after Theorem~\ref{t:prelinearity}, we can conclude $\lambda_c(a) \jn  \rho_d(b)= \ut$.
\end{proof}

This article focuses on cancellative varieties of residuated lattices, as one of the main tools used here is the categorical equivalence between cancellative (Ore) residuated lattices and $\ell$-groups with a conucleus. However, it is reasonable to ask whether some of the results presented here remain true in the absence of cancellativity. E.g., can we extend Corollary~\ref{c:hamsemil} to the nilpotent case, thereby concluding that prelinearity axiomatizes semilinearity in all nilpotent varieties? A positive answer to this question would require proof-techniques beyond the  ones used in this paper.


\section{Ordering Integral Residuated Lattices}\label{s:integral} 

The results of the preceding sections provide strong evidence of the importance of the notion of semilinearity in the study of Hamiltonian and nilpotent prelinear cancellative varieties. The present section is concerned with varieties of semilinear cancellative integral residuated lattices. It follows from standard facts in the theory of $\ell$-groups that a group embeds into a nilpotent \el group if and only if it admits a total order or, equivalently, if and only if it is nilpotent and torsion-free (cf.\ \cite[Theorem 2.2.4]{BMR1977}). In view of Theorem~\ref{t:five}, it is natural to ask whether every nilpotent cancellative monoid with unique roots admits a residuated total order. We provide a partial answer to this question, and show that any finitely generated free object in the quasivariety of nilpotent cancellative monoids with unique roots admits an integral total order (Theorem~\ref{t:orderonfree}). 

We call a total order $\le$ on a monoid (not necessarily residuated) {\em integral} if the monoid identity is the greatest element with respect to $\le$. We say that a poset $\m{P} = \langle P, \le \rangle$ satisfies the {\em ascending chain condition} (ACC) if $\m{P}$ does not contain any infinite ascending chains. Note that by (\ref{eq:residual2}), any total order on a monoid $\m{M}$ that satisfies the ACC is a residuated. 

\begin{lemma}\label{l:higman}
Every integral total order on a finitely generated monoid is residuated.
\end{lemma}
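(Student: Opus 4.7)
The plan is to reduce the statement to the \emph{ascending chain condition} (ACC) on the monoid under the given integral total order, and then invoke the remark immediately preceding the lemma: on a monoid, any total order satisfying the ACC is automatically residuated, because in that case each of the sets in~(\ref{eq:residual2}) is nonempty (it contains $\ut$'s ``multiples'' that are small enough) and, being a chain with no infinite ascending chain, contains a maximum.

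First, I would observe that in any integral totally ordered monoid $\langle \m M, \le\rangle$, every element is below $\ut$, and so for all $a,b \in M$ one has $ab \le a\ut = a$ and $ab \le \ut b = b$; more generally, for any product of elements in $\m M$, inserting additional factors in $M$ produces a smaller element. Fix now a finite generating set $X = \{x_1,\ldots,x_n\}$ for $\m M$. If $w = x_{i_1}x_{i_2}\cdots x_{i_k}$ and $w' = u_0 x_{i_1} u_1 x_{i_2}\cdots x_{i_k} u_k$ with $u_0,\ldots,u_k \in X^*$, then evaluating in $\m M$ and using the observation above repeatedly gives $w' \le w$. Hence subword embedding between two words in $X^*$ (in the sense of subsequence embedding) implies $\ge$ between the corresponding monoid elements.

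Now I would invoke Higman's lemma: for any finite alphabet $X$, the free monoid $X^*$ is well-quasi-ordered by subword embedding. In particular, any infinite sequence $w_1,w_2,\ldots$ of words in $X^*$ admits indices $i<j$ such that $w_i$ embeds into $w_j$. Suppose, toward a contradiction, that $\m M$ contained an infinite strictly ascending chain $a_1 < a_2 < \cdots$; choose words $w_i \in X^*$ representing $a_i$. By Higman's lemma there exist $i < j$ such that $w_i$ embeds into $w_j$, whence $a_j \le a_i$ by the preceding paragraph, contradicting $a_i < a_j$. Thus $\langle \m M, \le\rangle$ satisfies the ACC and is residuated.

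I do not expect any significant obstacle here: the sole substantive ingredient is Higman's lemma, applied to a finite alphabet, and integrality converts subword embedding into the reverse monoid order in an entirely mechanical way. The only point that would require a sentence of care is remarking that the ACC suffices for residuation in a totally ordered monoid, which follows at once from~(\ref{eq:residual2}) since the sets in question are totally ordered by the restriction of $\le$ and contain, say, $\ut$ when $b\ge a$ (and are nonempty in general because $\m M$ is assumed to admit a total order making multiplication order-preserving).
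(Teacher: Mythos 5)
Your proof is correct and takes essentially the same route as the paper's: integrality turns Higman's subword-embedding relation into the reverse of the monoid order, so an infinite ascending chain would contradict Higman's lemma, and the resulting ACC yields residuation via~(\ref{eq:residual2}) (for nonemptiness of those sets, the cleanest witness is $b$ itself, since $ab\le b$ and $ba\le b$ by integrality). The paper merely phrases the same argument through a surjection from the free monoid and a choice function selecting preimages.
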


\begin{proof}
Let $\m{M}$ be a monoid generated by $n$ elements, and set $\le$ to be an integral total order on $\m{M}$. Then, there exists a surjective monoid homomorphism $\varphi$ from the free monoid $\m{M}(x_1, \ldots, x_n) = \m{M}(n)$ to $\m{M}$. We show that $\langle \m{M}, \le \rangle$ satisfies the ACC. Suppose that
\[
m_0 < m_1 < m_2 < \ldots < m_i < \ldots,
\]
is an infinite ascending chain in $\langle \m{M}, \le \rangle$. As $\varphi$ is onto, $\varphi^{-1}[\{m_i\}] \neq \emptyset$, for all $i \in \Z^+$. Consider
\[
\{t_i = f(\varphi^{-1}[\{m_i\}]) \mid i \in \Z^+\},
\]
where $f \colon \Z^+ \to \bigcup_{i \in \Z^+}\varphi^{-1}[\{m_i\}]$ is a choice function. Then, $\{t_i\}$ is an infinite sequence of words over the finite alphabet $\{x_1, \ldots, x_n\}$. By Higman's Lemma~\cite{MR0049867}, there must exist indices $i < j$ such that $t_i$ can be obtained from $t_j$ by deleting some symbols: e.g.,
\[
t_i = x_1\cdots \,x_k \quad\text{ and }\quad t_j=s_{j_0}x_{1}s_{j_1}\cdots \,x_{k}s_{j_k},
\]
where $s_{j_0}, s_{j_1}, \dots, s_{j_k}$ are arbitrary words in $\m{M}(n)$. Then, $\varphi(t_i) = m_i < m_j =\varphi(t_j)$, which entails
\[
\varphi(x_1)\cdots\varphi(x_k) < \varphi(s_{j_0})\varphi(x_{1})\varphi(s_{j_1})\cdots \,\varphi(x_{k})\varphi(s_{j_k}).
\]
This is a contradiction, since for all $a, b \in M$, $ab \le a, b$ due to the integrality of the order $\le$. Therefore, $\langle \m{M}, \le \rangle$ satisfies the ACC, and hence it is residuated.
\end{proof}

Let $\Icansemrl$ denote the variety of semilinear cancellative integral residuated lattices, and $\vty{V}$ be any monoid-subvariety of $\Icansemrl$. Observe that, as the variety $\vty{V}$ is defined relative to $\Icansemrl$ by a set $\Sigma$ of monoid equations, every member of the quasivariety $\vty{M}(\vty{V})$ satisfies $\Sigma$.

\begin{lemma}\label{l:orderonquasi}
For any monoid-subvariety $\vty{V}$ of $\Icansemrl$, every finitely generated monoid in the quasivariety $\vty{M}(\vty{V})$ is the monoid reduct of a totally ordered member of $\vty{V}$. 
\end{lemma}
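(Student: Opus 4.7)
The plan is to equip $\m{M}$ with an integral total order compatible with its monoid operation, and then invoke Lemma~\ref{l:higman} to see that this order is automatically residuated. Since $\m{M} \in \vty{M}(\vty{V})$, it embeds as a monoid into some $\m{L} \in \vty{V}$; and since $\vty{V}$ is semilinear, Birkhoff's theorem applied to $\vty{V}$ lets me embed $\m{L}$ subdirectly into a product $\prod_{i \in I} \m{T}_i$ of totally ordered members of $\vty{V}$. In particular, $\m{M}$ sits as a submonoid of $\prod_{i \in I} \m{T}_i$, with each $\m{T}_i$ a cancellative integral residuated chain in $\vty{V}$.

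Next, I fix a well-ordering of $I$ and transfer the lexicographic order from the product to $\m{M}$: for distinct $m, n \in M$, set $m <^{*} n$ iff $\pi_{i_0}(m) < \pi_{i_0}(n)$ in $\m{T}_{i_0}$, where $i_0$ is the least index of $I$ at which $\pi_{i_0}(m) \neq \pi_{i_0}(n)$. Such $i_0$ exists by the injectivity of the embedding, so $\leq^{*}$ is a well-defined total order; integrality is immediate since, at the first index where $m \neq \ut$, the integrality of $\m{T}_{i_0}$ forces $\pi_{i_0}(m) < \pi_{i_0}(\ut)$. The delicate step is compatibility of $\leq^{*}$ with multiplication. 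Given $m <^{*} n$ with first differing index $i_0$, the coordinates $\pi_i(mp)$ and $\pi_i(np)$ agree for all $i < i_0$ (because $\pi_i(m) = \pi_i(n)$ there), and at index $i_0$ the cancellativity of $\m{T}_{i_0}$ upgrades the strict inequality $\pi_{i_0}(m) < \pi_{i_0}(n)$ to the strict inequality $\pi_{i_0}(m)\pi_{i_0}(p) < \pi_{i_0}(n)\pi_{i_0}(p)$; a symmetric computation handles $pm$ versus $pn$. Hence $mp <^{*} np$ and $pm <^{*} pn$. The one genuinely nontrivial point is precisely this promotion of the weak inequality $\leq$ to the strict inequality $<$ after multiplication, which would fail without cancellativity and is what makes the lex construction work in this setting.

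It remains to package the result. Since $\m{M}$ is finitely generated, Lemma~\ref{l:higman} tells me that the integral total order $\leq^{*}$ is residuated, so $\langle \m{M}, \leq^{*} \rangle$ is a residuated chain. It is cancellative (as a submonoid of a cancellative monoid), integral, and totally ordered (hence semilinear), so it lies in $\Icansemrl$. Because $\vty{V}$ is defined inside $\Icansemrl$ by monoid equations, and the monoid reduct of $\langle \m{M}, \leq^{*} \rangle$ is exactly $\m{M} \in \vty{M}(\vty{V})$---which satisfies those equations by the observation preceding the lemma---we conclude $\langle \m{M}, \leq^{*} \rangle \in \vty{V}$, and $\m{M}$ is its monoid reduct, as desired.
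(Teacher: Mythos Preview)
Your proof is correct and follows essentially the same route as the paper: embed $\m{M}$ into a subdirect product of chains in $\vty{V}$, well-order the index set, pull back the lexicographic order, use cancellativity to verify that strict inequalities survive multiplication, invoke Lemma~\ref{l:higman} for residuation, and finally appeal to the fact that $\vty{V}$ is cut out of $\Icansemrl$ by monoid equations. The only cosmetic difference is that the paper defines the lex order on all of $\m{L}$ and then restricts, whereas you define it directly on $\m{M}$; your explicit remark that cancellativity is what promotes $\le$ to $<$ after multiplication is a nice clarification of the key point.
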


\begin{proof}
Let $\m{M}$ be a finitely generated member of $\vty{M}(\vty{V})$ and a submonoid of a member $\m{L}$ of $\vty{V}$. 
Since $\vty{V}$ is semilinear, $\m{L}$ is the subdirect product of cancellative integral residuated chains $\m{T}_i$, $i \in I$. Let $\le$ be a well-order on $I$, and for $a=(a_i)_{i\in I}$, $b=(b_i)_{i \in I} \in L$, set 
\[
a \trianglelefteq b\quad\text{ iff }\quad a = b\,\,\text{ or }\,\,(a_j < b_j,\text{ where }j = \min\{i \in I \mid a_i \ne b_i\}).
\] 
We claim that $\trianglelefteq$ is an integral total order on $\m{L}$ extending its lattice order. Indeed, let $a, b, c$ be elements of $L$ such that $a \triangleleft b$. Then $a_j < b_j$ for $j = \min\{i \in I \mid a_i \ne b_i\}$. By cancellativity, $a_jc_j < b_jc_j$ (resp.\ $c_ja_j < c_jb_j$), and hence, $ac \triangleleft bc$ (resp.\ $ca \triangleleft cb$). The restriction of the total order $\trianglelefteq$ to the finitely generated monoid $\m{M}$ is residuated by Lemma~\ref{l:higman}. Moreover, as $\vty{V}$ is a monoid-subvariety of $\Icansemrl$ and $\m{M}$ is a submonoid of $\m{L}$, $\langle \m{M}, \le \rangle$ is a member of $\vty{V}$.
\end{proof}

Every variety of semilinear residuated lattices is generated by its finitely generated totally ordered members. In the case of monoid-subvarieties of $\Icansemrl$, we have the following stronger result.

\begin{theorem}\label{t:fgmongen}
Every monoid-subvariety of $\Icansemrl$ is generated by the class of residuated chains whose monoid reducts are finitely generated monoids. 
\end{theorem}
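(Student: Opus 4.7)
The strategy is to exploit semilinearity to reduce to residuated chains, and then to "cut down" an arbitrary chain to a finitely generated submonoid while preserving the failure of an equation. Fix a monoid-subvariety $\vty{V}$ of $\Icansemrl$ and let $\vty{W}$ denote the subvariety of $\vty{V}$ generated by those chains in $\vty{V}$ whose monoid reducts are finitely generated. Clearly $\vty{W}\subseteq\vty{V}$, and since $\vty{V}$ is semilinear, every subdirectly irreducible member of $\vty{V}$ is a chain; hence it suffices to prove that each residuated chain $\m{T}\in\vty{V}$ already lies in $\vty{W}$.

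By logical contrapositive, the plan is to show that any equation $s\approx t$ failing in some chain $\m{T}\in\vty{V}$ must already fail in some chain $\m{T}^\star\in\vty{V}$ with finitely generated monoid reduct. Assume the failure occurs at a tuple $\bar{a}=(a_1,\ldots,a_n)\in T^n$. Compute in $\m{T}$ the values of \emph{all} subterms of $s$ and $t$ at $\bar{a}$; this yields a finite set $S\subseteq T$ containing, in particular, every value of the form $u\ld_{\m T} v$ or $v\rd_{\m T} u$ that arises in the evaluation, as well as $a_1,\ldots,a_n$. Let $\m{M}\coloneqq \sgr{S}$ be the submonoid of $\m{T}$ generated by $S$; then $\m{M}$ is finitely generated, cancellative, and belongs to $\vty{M}(\vty{V})$. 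Applying Lemma~\ref{l:orderonquasi} to $\m{M}$ produces a totally ordered residuated lattice $\widetilde{\m{M}}\in\vty{V}$ whose monoid reduct is $\m{M}$. Inspecting the proof of Lemma~\ref{l:orderonquasi} in the special case where the ambient residuated lattice $\m{L}=\m{T}$ is already a chain, the constructed order $\trianglelefteq$ coincides with the order of $\m{T}$, so the lattice order of $\widetilde{\m{M}}$ is exactly the restriction of the order of $\m{T}$ to $M$.

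The crux of the argument is then to verify that the evaluation of each subterm of $s$ and $t$ at $\bar{a}$ in $\widetilde{\m{M}}$ coincides with the corresponding evaluation in $\m{T}$. For products this is automatic; for lattice operations, since $\widetilde{\m{M}}$ is a chain with the restricted order, meets and joins reduce to minima and maxima and so agree with those in $\m{T}$. The delicate case is the residuals: given a subterm value $w = u \ld_{\m T} v$ with $u,v\in M$, the element $w$ lies in $S\subseteq M$ by construction, we have $uw\le v$ in $\widetilde{\m{M}}$, and any $c\in M$ with $uc\le v$ satisfies $c\le w$ because $w$ is maximal even in $\m{T}$. Hence $u\ld_{\widetilde{\m{M}}} v = w$, and similarly for $\rd$. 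An induction on subterm complexity then shows that $s$ and $t$ evaluate in $\widetilde{\m{M}}$ to the same distinct elements they do in $\m{T}$, witnessing the failure of $s\approx t$ in $\widetilde{\m{M}}$.

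The main obstacle, and the point that requires care, is precisely this coincidence of residuals: the residuals in $\widetilde{\m{M}}$ are produced abstractly from the ACC argument of Lemma~\ref{l:higman} and a priori have nothing to do with the residuals of $\m{T}$, and in general they need not agree on $M$. The choice to include all subterm values arising in $\m{T}$ (not just the generators $a_i$) into the set $S$ is what forces the agreement at the specific elements needed to witness the failure of $s\approx t$. Once this is established, $\widetilde{\m{M}}$ is a chain in $\vty{V}$ with finitely generated monoid reduct in which $s\approx t$ fails, so $s\approx t$ fails in $\vty{W}$; contrapositively, every equation of $\vty{W}$ holds in $\m{T}$, which yields $\m{T}\in\vty{W}$ and concludes the proof.
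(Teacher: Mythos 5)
Your proposal is correct and follows essentially the same route as the paper: reduce to a chain by semilinearity, take the submonoid generated by the values of \emph{all} subterms of the failing equation, restrict the total order (residuated by Lemma~\ref{l:higman} via the ACC), and check by induction on subterms that the evaluations agree, the residual case being handled exactly as you describe because the $\m{T}$-residual already lies in $M$ and remains the greatest element of the relevant set there. The only cosmetic difference is that you route the ordering step through Lemma~\ref{l:orderonquasi} specialized to a single chain, whereas the paper restricts the order of $\m{T}$ directly and cites Lemma~\ref{l:higman}.
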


\begin{proof}
For any monoid-subvariety $\vty{V}$ of $\Icansemrl$, we show that an equation $t_1 \eq t_2$ that fails in $\vty{V}$ necessarily fails in a $\vty{V}$-chain whose monoid reduct is a finitely generated monoid. Let $t_1$ and $t_2$ be two residuated lattice terms such that 
\[
\nu(t_1)  \neq  \nu(t_2),
\] 
under the evaluation $\nu$ from the term algebra into the finitely generated residuated chain $\m{T}$. Let ${\sf s}(t_1)$ and ${\sf s}(t_2)$ denote, respectively, the set of all subterms of $t_1$ and the set of all subterms of $t_2$. Let $\m{M}$ be the submonoid of $\m{T}$ generated by the finite set:
\[
\{\nu(u) \mid u \in {\sf s}(t_1) \cup {\sf s}(t_2)\},
\]
and consider the restriction $\le\upharpoonright_{M}$ of the order $\le$ from $\m{T}$ to $\m{M}$. By Lemma~\ref{l:higman}, $\langle \m{M}, \le\upharpoonright_{M} \rangle$ is an integral residuated lattice, which is a submonoid and a sublattice of $\m{T}$---although it need not be a substructure, since residuals might not be preserved. Consider the evaluation $\mu$ from the term algebra into $\m{M}$, defined by $\mu(x) = \nu(x)$ for each variable $x$. We show that $\mu(u) = \nu(u)$, for every $u \in {\sf s}(t_1) \cup {\sf s}(t_2)$, by induction on the structure of $u$. The base case is trivial, since it follows by the definition of $\mu$. The cases involving monoid operation ($u = u_1 \cdot u_2$), and the lattice operations  ($u = u_1 \mt u_2$ or  $u = u_1 \jn u_2$) follows from the fact that $\m{M}$ is a submonoid and a sublattice of $\m{T}$. Suppose $u= u_1 \ld u_2$. It suffices to show 
\[
\mu(u_1) \ld_{\m{M}}\, \mu(u_2) = \mu(u_1) \ld_{\m{T}}\, \mu(u_2).
\] 
By induction hypothesis, $\mu(u_1) = \nu(u_1)$ and $\mu(u_2) = \nu(u_2)$. Therefore, 
\[
\mu(u_1) \ld_{\m{T}}\, \mu(u_2) = \nu(u_1) \ld_{\m{T}}\, \nu(u_2) = \nu(u_1 \ld u_2) \in {M}.
\]
Hence, we can conclude that
\[
\mu(u)  = \mu(u_1) \ld_{\m{M}}\, \mu(u_2)  = \mu(u_1) \ld_{\m{T}}\, \mu(u_2) = \nu(u_1) \ld_{\m{T}}\, \nu(u_2) = \nu(u),
\]
as was to be shown. Therefore, $\mu(t_1) \neq \mu(t_2)$ in $\m{M}$, and $t_1 \eq t_2$ fails in $\langle \m{M}, \le\upharpoonright_{M} \rangle$.
\end{proof}

The class of $c$-nilpotent semilinear cancellative integral residuated lattices is a monoid\nbd{-}subvariety of $\Icansemrl$, which we denote by $\Ni$. Thus, Theorem~\ref{t:fgmongen} can be applied to $\Ni$ as a particular case.

We mention a result that relates free objects in $\vty{M}(\Nrl)$ to free objects in $\vty{M}(\Ni)$.

\begin{theorem}\label{t:orderonfree}
For any $c \in \N$, every finitely generated free object in the quasivariety $\vty{M}(\Nrl)$ admits an integral residuated total order. 
\end{theorem}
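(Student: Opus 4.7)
The plan is to identify $\m{M}_c(X)$ with a submonoid of the free $c$-nilpotent group via Proposition~\ref{prop:fisleftquot}, build a total group order on the ambient group that makes every free generator negative, and then invoke Lemma~\ref{l:higman} to upgrade the inherited integral total order on the submonoid to a residuated one.

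For finite $X$, Proposition~\ref{prop:fisleftquot} identifies $\m{M}_c(X)$ with the submonoid $\m{S}_c(X)$ of the free $c$-nilpotent group $\m{F}_c(X)$ generated by $X$, where $\m{F}_c(X)$ is finitely generated and torsion-free. The goal is to construct a total group order $\preceq$ on $\m{F}_c(X)$ with $x \prec \ut$ for every $x \in X$. Once such a $\preceq$ is in hand, its restriction to $\m{S}_c(X)$ is automatically integral: in an ordered group, $a, b \prec \ut$ force $ab \prec b \prec \ut$, so any product of generators lies below $\ut$. Because $\m{S}_c(X)$ is finitely generated as a monoid, Lemma~\ref{l:higman} then upgrades this integral total order to a residuated one, which can be transported to $\m{M}_c(X)$ along the isomorphism.

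To build $\preceq$ on $G := \m{F}_c(X)$, I would exploit the lower central series
\[
G = \Gamma_1 \supseteq \Gamma_2 \supseteq \cdots \supseteq \Gamma_c \supseteq \Gamma_{c+1} = \{\ut\}.
\]
Classical results on free nilpotent groups (see, e.g., \cite{KarM79}) guarantee that each consecutive quotient $\Gamma_i/\Gamma_{i+1}$ is a finitely generated free Abelian group, and that $\Gamma_1/\Gamma_2$ is free Abelian on $\{x\Gamma_2 : x \in X\}$. Fix an enumeration of $X$ and equip $\Gamma_1/\Gamma_2$ with the reverse lexicographic order relative to this basis, so that each $x\Gamma_2$ is strictly negative; equip each remaining quotient $\Gamma_i/\Gamma_{i+1}$ with an arbitrary total group order. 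Now define $\preceq$ on $G$ by declaring $\ut \prec g$ iff, for $i := \max\{j : g \in \Gamma_j\}$, the coset $g\Gamma_{i+1}$ is strictly positive in the chosen order on $\Gamma_i/\Gamma_{i+1}$. Since $\Gamma_i/\Gamma_{i+1}$ sits in the center of $G/\Gamma_{i+1}$, a routine verification shows that the positive cone thus defined is closed under products and under conjugation, so $\preceq$ is a total group order; by construction each $x \in X$ satisfies $x \prec \ut$.

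The main obstacle is the construction of $\preceq$ itself, which rests on two standard structural inputs about free nilpotent groups: the freeness and finite rank of the lower-central quotients, and their centrality in the successive quotients of $G$. Both facts are classical and can be cited from \cite{KarM79} or \cite{BMR1977} without further combinatorial work. After that, integrality of the restriction to $\m{S}_c(X)$ is immediate, and residuation is delivered for free by Lemma~\ref{l:higman}.
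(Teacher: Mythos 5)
Your proof is correct, but it takes a different route from the paper's at the key step of producing a total group order on $\m{F}_c(X)$ under which every generator is negative. The paper does not construct such an order explicitly: it takes an \emph{arbitrary} total order on $\m{F}_c(n)$ (citing orderability of torsion-free nilpotent groups), chooses signs $\delta_i\in\{-1,1\}$ so that each $x_i^{\delta_i}<\ut$, applies Lemma~\ref{l:higman} to the submonoid $\m{S}_c(X^{\delta})$ generated by $X^{\delta}$, and then uses the automorphism of $\m{F}_c(n)$ extending $x_i\mapsto x_i^{\delta_i}$ to transport the resulting integral residuated order back to $\m{S}_c(X)\cong\m{M}_c(n)$. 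You instead build a concrete order with all generators negative via the lower central series, using the classical fact that the lower central quotients of a free nilpotent group are finitely generated free Abelian, with $\Gamma_1/\Gamma_2$ free Abelian on the generator images; your centrality argument for closure of the positive cone under products and conjugation is sound (note that the product of two elements of the same depth cannot drop to greater depth, since the sum of two strictly positive elements of $\Gamma_i/\Gamma_{i+1}$ is nonzero). The trade-off: the paper's argument needs only the orderability of torsion-free nilpotent groups plus the universal property of $\m{F}_c(n)$ (to get the sign-flipping automorphism), whereas yours imports more structure theory of free nilpotent groups but yields an explicit order and avoids the detour through $\m{S}_c(X^{\delta})$. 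Both arguments rely essentially on freeness and on Lemma~\ref{l:higman} in the same way.
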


\begin{proof}
Let $\m{F}_c(n)$ be the free $c$-nilpotent group generated by the finite set $X=\{x_1, \ldots, x_n\}$. We consider a total order $\le$ on $\m{F}_c(n)$. This is possible since $\m{F}_c(n)$ is torsion-free. Let ${\m S}_c(X^\delta)$ be the submonoid of $\m{F}_c(n)$ generated by $X^\delta=\{x_1^{\delta_1}, \ldots, x_n^{\delta_n}\}$, with $\delta_i \in \{-1,1\}$ and $x_i^{\delta_i} < \ut$ for each $i \in \{1, \ldots, n\}$. The restriction of the total order $\le$ to ${\m S}_c(X^\delta)$ induces an integral residuated total order on ${\m S}_c(X^\delta)$, by Lemma~\ref{l:higman}. Now, we conclude by observing that ${\m S}_c(X^\delta)$ is isomorphic to ${\m S}_c(X)$, and hence to $\m{M}_c(n)$ by Proposition~\ref{prop:fisleftquot}. For this, it suffices to consider the unique group homomorphism 
\[
\alpha \colon \m{F}_c(n) \to \m{F}_c(n),
\]
extending the map $x_i \mapsto x_i^{\delta_i}$. This is an automorphism of $\m{F}_c(n)$, whose restriction to ${\m S}_c(X)$ is a monoid isomorphism onto ${\m S}_c(X^\delta)$.
\end{proof}

\noindent By Theorem~\ref{t:orderonfree}, the free object over a finite set $X$ in $\vty{M}(\Ni)$ coincides with the free object $\m{M}_c(X)$ in the quasivariety $\vty{M}(\Nrl)$.

\begin{example}
For any $2 \le c \in \N$, we can find a negative cone of a $c$-nilpotent $\ell$-group ${\m G}$ that separates the variety of $c$-nilpotent cancellative integral residuated lattices from the variety of $(c-1)$-nilpotent cancellative integral residuated lattices. It suffices to take a $c$\nbd{-}nilpotent $\ell$-group ${\m G}$ that separates the variety of $c$-nilpotent $\ell$-groups from the variety of $(c-1)$-nilpotent $\ell$-groups. 
Given that any $\mathsf{L}_c$ is a monoid equation, the negative cone of ${\m G}$ is a $c$-nilpotent residuated lattice. Moreover, given that ${\m G}$ is not $(c-1)$\nbd{-}nilpotent, its negative cone does not satisfy $\mathsf{L}_{(c-1)}$ either. This follows from Proposition~\ref{prop:nilp}, Theorem~\ref{t:catequiv}, and from the fact that an $\ell$-group is the group of quotients of its negative cone (see, e.g., \cite[Proposition 4.1]{MR1304052}). For a related result, see~\cite[Corollary 6.3]{bahls2003cancellative}, where an isomorphism between varieties of $\ell$-groups and varieties of negative cones is exhibited.
\end{example}

We conclude by mentioning an interesting question that is left open in this paper:
Is it possible to obtain, analogously to Theorem~\ref{t:five}, a characterization of the submonoids of the negative cones of $c$-nilpotent $\ell$-groups? Note that a monoid $\m{M}$ can be embedded into the negative cone of an Abelian $\ell$\nbd{-}group if and only if $\m{M}$ is commutative, cancellative, has unique roots, and (*) does not contain any (non\nbd{-}trivial) invertible element. It is clear that (*) is necessary. To see that it suffices, let $\m{M}$ be a submonoid of a torsion-free Abelian group $\m{G}$ such that $M \cap \iv{M} = \{\ut\}$, where $\iv{M} = \{\iv{a} \mid a \in M\}$. Then, $\m{M}$ is the negative cone of a partial order on $\m{G}$ (see, e.g.,~\cite[Ch.\ II, Theorem 2]{MR0171864}). Since every partial order on a torsion-free Abelian group $\m{G}$ extends to a total order (see, e.g.,~\cite[Ch.\ III, Corollary 13]{MR0171864}), $\m{M}$ can be extended to the negative cone of a total order on $\m{G}$, and hence can be embedded into the negative cone of an Abelian totally ordered group. For non-commutative \el groups the condition (*) does not suffice. For instance for $c \ge 2$, a submonoid of a torsion-free $c$-nilpotent group $\m{G}$ satisfying (*) is, in general, the negative cone of a partial order on $G$ compatible only with right multiplication (for all $a, b, c \in G$, whenever $a \le b$, also $ac \le bc$), and it is not true that any such partial `right-invariant' order can be extended to a total order on $\m{G}$~\cite{MR1023968}. Therefore, it remains open to characterize those submonoids that can be embedded into the negative cone of a $c$-nilpotent $\ell$-group---equivalently, the submonoids of $c$-nilpotent cancellative integral residuated lattices (cf.\ Theorem~\ref{t:five}).

\vskip0.5cm
\section*{Acknowledgements}

We are grateful to Adam P\v{r}enosil for providing the proof of Lemma~\ref{l:higman}. 

\vskip0.75cm


\end{document}